\newdimen\plusheight
\def\+{\;\lower\plusheight\hbox{$+$}\;}
\newdimen\minusheight
\def\-{\;\lower\minusheight\hbox{$-$}\;}
\newdimen\cdotsheight
\def\cds{\lower\cdotsheight\hbox{$\cdots$}}
\def\leqalignno#1{\displ@y \tabskip\z@ plus\@ne fil
  \halign to\displaywidth{\hfil$\@lign\displaystyle{##}$\tabskip\z@skip
    &$\@lign\displaystyle{{}##}$\hfil\tabskip\z@ plus\@ne fil
    &\kern-\displaywidth\rlap{$\@lign\hbox{\rm##}$}\tabskip\displaywidth\crcr
    #1\crcr}}
\let\dotlessi=\i
\newcommand{\eb}{\begin{equation}}
\newcommand{\ee}{\end{equation}}
\newcommand{\df}{\dfrac}
\newcommand{\tf}{\tfrac}
\renewcommand{\Re}{\operatorname{Re}}
\newcommand{\Reg}{\operatorname{Reg}}
\renewcommand{\b}{\beta}
\renewcommand{\k}{\kappa}
\renewcommand{\Re}{\text{Re}}
\renewcommand{\(}{\left\(}
\renewcommand{\)}{\right\)}
\renewcommand{\[}{\left\[}
\renewcommand{\]}{\right\]}
\renewcommand{\i}{\infty}
\renewcommand{\pmod}[1]{\,(\textup{mod}\,#1)}
\numberwithin{equation}{section}
 \theoremstyle{plain}
\newtheorem{theorem}{Theorem}[section]
\newtheorem{corollary}[theorem]{Corollary}
\newtheorem{definition}[theorem]{Definition}
\DeclareMathOperator{\Realp}{Re}
\begin{document}
\title[ Divisor Sums in Algebraic Number Fields]
{Two-parameter Identities for Divisor Sums in Algebraic Number Fields}
\author{Bruce C.~Berndt, Martino Fassina, Sun Kim, and Alexandru Zaharescu}
\address{Department of Mathematics, University of Illinois, 1409 West Green
Street, Urbana, IL 61801, USA} \email{berndt@illinois.edu}
\address{Fakult\"at f\"ur Mathematik, Universit\"at Wien, Oskar-Morgenstern-Platz 1, 1090 Wien, Austria}
\email{martino.fassina@univie.ac.at}
\address{Department of Mathematics, and Institute of Pure and Applied Mathematics, Jeonbuk National University, 567 Baekje-daero, Jeonju-si, Jeollabuk-do 54896, Republic of Korea}
\email{sunkim@jbnu.ac.kr}
\address{Department of Mathematics, University of Illinois, 1409 West Green
Street, Urbana, IL 61801, USA; Institute of Mathematics of the Romanian
Academy, P.O.~Box 1-764, Bucharest RO-70700, Romania}
\email{zaharesc@illinois.edu}

%martino.fassina@univie.ac.at, Fakult\"at f\"ur Mathematik, Universit\"at Wien, Oskar-Morgenstern-Platz 1, 1090 Wien, Austria

\begin{abstract} In a one-page fragment published with his lost notebook, Ramanujan stated two double series identities associated, respectively, with the famous \emph{Gauss Circle}  and \emph{Dirichlet Divisor} problems.  The identities contain an "extra'' parameter, and it is possible that Ramanujan derived these identities with the intent of attacking these famous problems. Similar famous unsolved problems are connected with $f_K(n)$, the number of integral ideals of norm $n$ in an algebraic number field $K$. In this paper we establish Riesz sum identities containing an ``extra'' parameter and involving   $f_K(n)$, or divisor functions associated with $K$.  Upper bounds for the sums as the upper index tends to infinity are also established.  
\end{abstract}

\subjclass[2020]{Primary 11R42; Secondary 11M41; 11N37}

\keywords{Dedekind zeta function;  ideal functions; Dirichlet character;  Dirichlet divisor problem; Ramanujan's lost notebook; Bessel functions}

\maketitle

\section{Introduction}  Let $d(n)$ denote the number of positive divisors of the positive integer $n$, and set
 \begin{equation}\label{Dofx}
D(x):= {\sum_{n\leq x}}^{\prime}d(n),
\end{equation}
where the prime on the summation sign indicates that if $x$ is an integer, then only $\tf12 d(x)$ is counted in \eqref{Dofx}.
Dirichlet showed that \cite{dirichlet}
\begin{equation}%\label{dirichlet}
D(x)=x(\log{x}+2\gamma-1)+\df14+\Delta(x),
\end{equation}
where $\gamma$ denotes Euler's constant, and $\Delta(x)$ is the
``error term.''
One of the most famous unsolved problems in analytic number theory is to find the optimal order of magnitude for $\Delta(x)$, as $x \to \i$; this is the \emph{Dirichlet Divisor Problem}. (The fraction $\frac14$ is present because it arises naturally from analytic investigations and, of course, does not affect nontrivial  bounds for $\Delta(x)$.) Dirichlet's elementary argument  also yields the first upper bound for $\Delta(x)$, namely,
\begin{equation}%\label{deltaofx}
\Delta(x)=O(\sqrt{x}),\qquad x\to\i .
\end{equation}
Berndt, Kim, and Zaharescu \cite{monthly} provide a summary of upper bounds that have been achieved for $\Delta(x)$, as $x\to\infty$, in the twentieth century. 
A theorem of G.~H.~Hardy \cite{hardy1916} implies that $\Delta(x)\neq O(x^{1/4})$, as $x\to \infty$.

For over a century,  those seeking increasingly  better upper bounds for $\Delta(x)$ have found that 
 a famous formula of G.~F.~Vorono\"{\dotlessi} \cite{voronoi} to be useful.  In order to state his formula, we need some definitions.  Let $J_{\nu}(x)$ denote the ordinary Bessel function of order $\nu$.
 The Bessel function $Y_{\nu}(z)$
 of the second kind  \cite[p.~64, Equation (1)]{watson} is defined by
\begin{equation}\label{b.Y}
Y_{\nu}(z):=\df{J_{\nu}(z)\cos(\nu\pi)-J_{-\nu}(z)}{\sin(\nu\pi)},
\end{equation}
and the modified Bessel function $K_{\nu}(z)$
\cite[p.~78, Equation (6)]{watson} is defined,
for $-\pi<\arg z<\tf12\pi$, by
\begin{equation}\label{b.K}
K_{\nu}(z):=\df{\pi}{2}\,\df{e^{\pi{i}\nu/2}J_{-\nu}(iz)-e^{-\pi{i}\nu/2}J_{\nu}(iz)}{\sin(\nu\pi)}.
\end{equation}
If $\nu$ is an integer $n$, it is understood that we take the limits in \eqref{b.Y} and \eqref{b.K} as $\nu\to n$.  Then Vorono\"{\dotlessi}'s formula is given by
\begin{equation}\label{b.vor}
{\sum_{n\leq x}}^\prime d(n) =x\left(\log x
+2\gamma-1\right)+\frac{1}{4}+\sum_{n=1}^{\i}d(n)\left(\df{x}{n}\right)^{1/2}I_1(4\pi\sqrt{nx}),
\end{equation}
where $x>0$, and $I_1(z)$ is defined by
\begin{equation}\label{b.c.9}
I_{\nu}(z) := -Y_{\nu}(z)-\df{2}{\pi}K_{\nu}(z).
\end{equation}

In a one-page fragment published with his lost notebook \cite[p.~335]{lnb}, Ramanujan offered without proof an identity involving  the same Bessel functions that appear on the right-hand side of \eqref{b.vor}.
 For $x>0$ and $0<\theta<1$,
\begin{align}\label{b.c.11}
{\sum_{n\leq x}}^{\prime}&\left[\frac{x}{n}\right]\cos(2\pi n\theta)=
\df{1}{4} -x\log(2\sin(\pi\theta)) \notag\\
&+\df{1}{2}\sqrt{x}\sum_{m=1}^{\i}\sum_{n=0}^{\i}
\left\{\df{I_1\left(4\pi\sqrt{m(n+\theta)x}\right)}{\sqrt{m(n+\theta)}}
+\df{I_1\left(4\pi\sqrt{m(n+1-\theta)x}\right)}{\sqrt{m(n+1-\theta)}}\right\},
\end{align}
where $I_1(z)$ is defined in \eqref{b.c.9}.  An elementary argument shows that
\begin{equation*}%\label{elementary}
D(x)={\sum_{n\leq x}}^{\prime}\left[\df{x}{n}\right].
\end{equation*}
Hence, if $\theta=0$, the left side of \eqref{b.c.11} reduces to $D(x)$. Thus, \eqref{b.c.11} can be considered as a two-variable analogue of \eqref{b.vor}. In his first letter to Hardy \cite[p.~23]{bcbrar}, Ramanujan expressed his interest in \emph{Dirichlet's Divisor Problem}, although it is doubtful that he would have been familiar with this attribution. Perhaps Ramanujan had derived \eqref{b.c.11} with the intention of applying it to the \emph{Divisor Problem}.

 Berndt, Kim, and Zaharescu first proved \eqref{b.c.11}, but with the order of summation reversed and with the  assumption that the double series on the right-hand side of \eqref{b.c.11} converges for at least  one value of $\theta$ \cite{besselII}.  A complete proof of \eqref{b.c.11} was later established by Berndt and Zaharescu along with J.~Li \cite{Paper2}.  Kim established similar theorems for weighted divisor sums and Riesz sums \cite{sunkim}.  Very briefly and roughly, a Riesz sum is an extension of a sum such as \eqref{Dofx}, but with the summands now weighted by $(x-n)^{\rho}$, where  $\rho>a$, for some real number $a$.

In this paper, we establish analogues of \eqref{b.c.11} for divisor sums associated with Dedekind zeta functions attached to algebraic number fields.  As in \cite{sunkim}, our analogues are in the more general setting of Riesz sums.   Our motivation arises from unsolved problems for these divisor sums that are analogous to the \emph{Dirichlet Divisor Problem}.

   Let $K$ be an algebraic number field. For $\sigma=\Re(s)>1$, the Dedekind zeta function associated with $K$ is defined by
\begin{equation*}
\zeta_K(s):=\sum_{I\subseteq \mathcal{O}_K}\frac{1}{\big(N_{K/\mathbb{Q}}(I)\big)^s},
\end{equation*}
where $\mathcal{O}_K$ is the ring of integers of $K$, the sum is over all non-zero integral ideals $I$ of $\mathcal{O}_K,$  and $N_{K/\mathbb{Q}}(I)$ denotes the norm of $I.$
Furthermore, define $f_K(n)$ by
\begin{equation}\label{fK}
\zeta_K(s)=:\sum_{n=1}^{\infty}\frac{f_K(n)}{n^s}.
\end{equation}
 Set
\begin{equation*}
{\sum_{n\leq x}}^{\prime}f_K(n)=\gamma_{-1}(K)x+E_K(x),
\end{equation*}
where $\gamma_{-1}(K)$ is a constant defined by \eqref{res} in the following section, and where $E_K(x)$ is the ``error term.''
  E.~Landau's began this study in 1912 when he proved that  \cite{landau}
  \begin{equation*} 
{\sum_{n\leq x}}^{\prime}f_K(n)=\gamma_{-1}(K)x+O(x^{(k-1)/(k+1)}),
\end{equation*}
where $k$ is the degree of $K$, and the implied constant in the big-O term depends upon $K$.  Currently, the best results are due to W.~G.~Nowak \cite{nowak} and B.~Paul and A.~Sankaranarayanan \cite{ps}.  The former author proved that, for $k\geq3$,  
\begin{equation*}
E_K(x) =O\left(x^{1-\frac{2}{k}+\frac{3}{2k^2}}(\log x)^{\frac{2}{k}}\right), 
\end{equation*}
 while the latter two authors proved that, for $k\geq 10$,
 \begin{equation*} 
 E_K(x)=O\left(x^{1-\frac{3}{k+6}+\epsilon}\right),
 \end{equation*}
 for every $\epsilon>0$.  
 Observe that 
 $$\dfrac{k-1}{k+1}=1-\dfrac{2}{k}+\df{2}{k(k+1)},$$
 and, for $k\geq10$, 
 $$\df{3}{k+6}>\df{2}{k}-\df{3}{2k^2}=\df{4k-3}{2k^2}.$$

Our two primary theorems below, Theorem \ref{theorem3} and Theorem \ref{2ndmain}, provide analogues of \eqref{b.c.11} involving, respectively, $f_K(n)$ and $d_{\chi_D}(n)$, where $d_{\chi_D}(n):={\sum}_{k|n}\chi_D(k)$, and $D$ is a fundamental discriminant.  In Theorems \ref{theorem6.3} and \ref{theorem6.4}, we obtain upper bounds for the ``error'' terms associated with these sums.  Perhaps our identities with the additional parameter $\theta$ may be of use in attacking these long-standing unsolved problems originating with Landau.

\section{Preliminary Results}\label{prelim}
Define, for a Dirichlet character $\chi$,
\begin{align*}
F_K(x):=\sum_{n\leq x}f_K(n), \quad D_{K}(n):=\sum_{d|n}f_K(d), \quad  D_{K, \chi}(n):=\sum_{d \mid n}f_K(d)\chi(n/d).
\end{align*}
If  $\sigma>1$,
\begin{align*}
\zeta_K(s)\zeta(s)=\sum_{n=1}^{\infty}\frac{f_K(n)}{n^s}\sum_{n=1}^{\infty}\frac{1}{n^s}
=\sum_{n=1}^{\infty}\frac{D_{K}(n)}{n^s},
\end{align*}
and
\begin{align*}
\zeta_K(s)L(s,\chi)=\sum_{n=1}^{\infty}\frac{f_K(n)}{n^s}\sum_{n=1}^{\infty}\frac{\chi(n)}{n^s}
=\sum_{n=1}^{\infty}\frac{D_{K,\chi}(n)}{n^s},
\end{align*}
where $\zeta(s)$ denotes the Riemann zeta function, and $L(s, \chi)$ denotes the Dirichlet $L$-function associated with the character $\chi.$

Also,
\begin{align*}
\sum_{n\leq x}F_K\Big(\frac{x}{n}\Big)\chi(n)&=\sum_{n\leq x}\sum_{m\leq x/n}f_K(m)\chi(n)=\sum_{mn\leq x}f_K(m)\chi(n)\\
&=\sum_{n\leq x}\sum_{d \mid n}f_K(d)\chi(n/d)=\sum_{n \leq x}D_{K, \chi}(n).
\end{align*}

Next, let $\Delta_K$ denote the discriminant of $K$, and let $r_1$ and $r_2$ denote the number of real embeddings and the number of conjugate pairs of complex embeddings of $K,$ respectively.
The Dedekind zeta function $\zeta_K(s)$ has a simple pole at $s=1$ with residue \cite[p.~467]{neukirch}
\begin{equation}\label{res}
\gamma_{-1}(K):=\frac{2^{r_1+r_2}\pi^{r_2}\Reg_Kh_K}{w_K\sqrt{|\Delta_K|}},
\end{equation}
where $h_K$ is the class number of $K$, $\Reg_K$ is the regulator of $K$, and $w_K$ denotes the number of roots of unity in $K$. The Laurent expansion of $\zeta_K(s)$ around  $s=1$ is given by
\begin{equation}\label{dzetaexp}
\zeta_K(s)=\frac{\gamma_{-1}(K)}{s-1}+\sum_{n=0}^{\infty}\gamma_n(K)(s-1)^n.
\end{equation}

The function
\begin{align}\label{Lam}
\Lambda_K(s):=|\Delta_K|^{s/2}\left(\pi^{-s/2}\Gamma\big(\tf{1}{2}s\big)\right)^{r_1}\left(2(2\pi)^{-s}\Gamma(s)\right)^{r_2}\zeta_K(s)
\end{align}
satisfies the functional equation \cite[p.~466]{neukirch}
\begin{align}\label{Lamfe}
\Lambda_K(s)=\Lambda_K(1-s).
\end{align}
We also recall that the functional equation of the Riemann zeta function $\zeta(s)$ is given by \cite[p.~59]{davenport}
\begin{equation}\label{zetafe}
\pi^{-s/2}\Gamma\big(\tf12 s\big)\zeta(s)=\pi^{-(1-s)/2}\Gamma\big(\tf12 (1-s)\big)\zeta(1-s).
\end{equation}
If $\chi$ is an even, non-principal, primitive character of modulus $q$, then the Dirichlet $L$-function
$L(s,\chi)$ satisfies the functional equation
\cite[p.~69]{davenport}
\begin{equation}\label{evencha}
\Big(\frac{\pi}{q}\Big)^{-s/2}\Gamma\big(\tf12 s\big)L(s,\chi)=\df{G(\chi)}{\sqrt{q}}\Big(\frac{\pi}{q}\Big)^{-(1-s)/2}\Gamma(\tf12 (1-s))L(1-s,\overline{\chi}),
\end{equation}
where $G(\chi)$ denotes the Gauss sum
\begin{equation}\label{gauss}
G(\chi):=\sum_{h=1}^{q-1}\chi(h)e^{2\pi{i}h/q}.
\end{equation}
%If $\chi$ is a primitive odd character modulo $q.$ Then \cite[p.~71]{HD}
%\begin{align}\label{oddcha}
%\Big(\frac{\pi}{q}\Big)^{-\tf{s+1}{2}}\Gamma\big(\tf{s+1}{2}\big)L(s,\chi)=
%-\frac{iG(\chi)}{\sqrt{q}}\Big(\frac{\pi}{q}\Big)^{-(1-\tf s2)}\Gamma(1-\tf s2)L(1-s,\overline{\chi}).
%\end{align}

\begin{definition}\label{definition1}
The Meijer $G$-function is defined by \cite[p.~374]{erd1}
\begin{equation}\label{G}
G_{p,q}^{m,n}\left(x\Bigg| \begin{matrix}a_1,a_2,\dots,a_p\\b_1,b_2,\dots, b_q\end{matrix}\right)
:=\df{1}{2\pi i}\int_L\df{\underset{j=1}{{\overset{m}{\prod}}}\Gamma(b_j-s)\underset{j=1}{{\overset{n}{\prod}}}\Gamma(1-a_j+s)}
{\underset{j=m+1}{{\overset{q}{\prod}}}\Gamma(1-b_j+s)\underset{j=n+1}{{\overset{p}{\prod}}}\Gamma(a_j-s)}x^s \,ds,
\end{equation}
where $L$ is a path from $-i\infty$ to $+i\infty$ separating the poles of $\Gamma(b_1-s)\cdots\Gamma(b_m-s)$ from those of
 $\Gamma(1-a_1+s)\cdots\Gamma(1-a_n+s)$.
 When the arguments $a_1,a_2,\dots, a_p; b_1,b_2,\dots,b_q$ are omitted, we write $
 G_{p,q}^{m,n}$ or $G_{p,q}^{m,n}(x)$.
 
\end{definition}

\section{The Case: $r_2=0$}
We consider the case $r_2=0.$ Let $\chi$ be a non-principal, even, primitive character of modulus $q.$
By \eqref{Lam}, \eqref{Lamfe}, and \eqref{evencha}, the functional equation of $\zeta_K(2s)L(2s,\chi)$ is given by
\begin{align}\label{feq}
&\pi^{-s(r_1+1)}q^{s}|\Delta_K|^{s}\Gamma(s)^{r_1+1}\zeta_K(2s)L(2s,\chi) \notag \\
&\qquad =\frac{G(\chi)}{\sqrt{q}}\pi^{-\big(\frac{1}{2}-s\big)(r_1+1)}q^{\frac12-s}|\Delta_K|^{\frac{1}{2}-s}\Gamma\big(\tf {1}{2}-s\big)^{r_1+1}\zeta_K(1-2s)L(1-2s,\overline{\chi}).
\end{align}

Below and in the sequel it is tacitly assumed that if $\rho=0$, only one-half of the last term in a finite sum is counted.

\begin{theorem}\label{rsub1} Let $q$ be a positive integer, and let $\chi$ be a non-principal even primitive character modulo $q.$
Then, for $x>0$ and $\rho>\tf12 r_1 -1,$
\begin{align}\label{t1}
&\frac{1}{\Gamma(\rho+1)}
\sum_{n\leq x}D_{K, \chi}(n)(x^2-n^2)^{\rho} \notag\\
&=\frac{G(\chi)|\Delta_K|^{1/2}x^{2\rho}}{\pi^{(r_1+1)/2}}\sum_{n=1}^{\infty}\frac{D_{K, \overline{\chi}}(n)}{n}
G_{2(r_1+1),0}^{0,r_1+1}\Big(\frac{q^2|\Delta_K|^2}{\pi^{2(r_1+1)}(nx)^2}
\Big| \begin{matrix}\tf12,\tf12,\dots, \tf12,\rho+1,0,\dots, 0\\-\end{matrix}\Big)\notag\\
&\qquad -\frac{G(\chi)\sqrt{\pi}\gamma_{-1}(K)x^{1+2\rho}}{2q\Gamma(\rho+\tf32)}
\sum_{n=1}^{q-1}\overline{\chi}(n)\log{(2\sin(\pi n/q))},
\end{align}
where $G_{2(r_1+1),0}^{0,r_1+1}$ is defined in Definition \ref{definition1}, and where there are $r_1$\, $\{0\}$'s and $r_1+1$\, $\{\tf12\}$'s in $G_{2(r_1+1),0}^{0,r_1+1}$.
\end{theorem}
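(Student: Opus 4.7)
The plan is to represent the Riesz sum on the left side of \eqref{t1} as a Mellin--Barnes integral, shift the contour to the left past the unique pole in the strip, and use the functional equation \eqref{feq} to convert the remaining integral into a convergent series of Meijer $G$-functions.

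Via Mellin inversion of the Beta integral $\int_0^1(1-u)^\rho u^{s-1}\,du=\Gamma(s)\Gamma(\rho+1)/\Gamma(s+\rho+1)$, one has
\begin{equation*}
\frac{(x^2-n^2)_+^\rho}{\Gamma(\rho+1)}=\frac{1}{2\pi i}\int_{(c)}\frac{\Gamma(s)}{\Gamma(s+\rho+1)}n^{-2s}x^{2s+2\rho}\,ds,\qquad c>0.
\end{equation*}
Multiplying by $D_{K,\chi}(n)$, summing over $n$, and interchanging sum and integral for $c>\tf12$ (where $\zeta_K(2s)L(2s,\chi)$ converges absolutely) yields
\begin{equation*}
\frac{1}{\Gamma(\rho+1)}\sum_{n\leq x}D_{K,\chi}(n)(x^2-n^2)^\rho=\frac{1}{2\pi i}\int_{(c)}\frac{\Gamma(s)}{\Gamma(s+\rho+1)}\zeta_K(2s)L(2s,\chi)\,x^{2s+2\rho}\,ds.
\end{equation*}
The $\rho=0$ half-weight convention at integer $x$ emerges automatically from Perron's formula.

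Next I would shift the contour from $\Re s=c$ to $\Re s=c'$ for some $c'<0$. The only pole encountered is the simple pole of $\zeta_K(2s)$ at $s=\tf12$ with residue $\gamma_{-1}(K)/2$; the prospective poles of $\Gamma(s)$ at $s=0,-1,-2,\dots$ are cancelled by the trivial zeros of $L(2s,\chi)$ there (since $\chi$ is even, primitive, and nontrivial), together with the trivial zeros of $\zeta_K(2s)$ forced by the Gamma factor in \eqref{Lam}. The residue at $s=\tf12$ evaluates to $\sqrt{\pi}\,\gamma_{-1}(K)L(1,\chi)x^{1+2\rho}/(2\Gamma(\rho+\tf32))$. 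Inserting the classical log-sine formula for even primitive characters,
\begin{equation*}
L(1,\chi)=-\frac{G(\chi)}{q}\sum_{n=1}^{q-1}\overline{\chi}(n)\log(2\sin(\pi n/q)),
\end{equation*}
which is obtained by Fourier-inverting $\chi$ through the Gauss sum relation and summing $\sum_{n\geq1}e^{2\pi in\theta}/n=-\log(1-e^{2\pi i\theta})$, reproduces the second term on the right side of \eqref{t1}.

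On the shifted contour I would apply \eqref{feq} to replace $\zeta_K(2s)L(2s,\chi)$ by $\zeta_K(1-2s)L(1-2s,\overline{\chi})$ times an explicit ratio of Gamma and power factors. Since $\Re(1-2s)>1$ on this line, the dual series expands as $\sum_n D_{K,\overline{\chi}}(n)n^{2s-1}$ and may be interchanged with the integral. Collecting constants produces the prefactor $G(\chi)|\Delta_K|^{1/2}x^{2\rho}/\pi^{(r_1+1)/2}$, while the inner integral, after the reflection $s\to-s$ that aligns the standard contour, matches the Mellin--Barnes definition \eqref{G} of $G_{2(r_1+1),0}^{0,r_1+1}$ with the parameters $\tf12$ (repeated $r_1+1$ times), $\rho+1$, and $0$ (repeated $r_1$ times), evaluated at $q^2|\Delta_K|^2/(\pi^{2(r_1+1)}(nx)^2)$, delivering the first term of \eqref{t1}.

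The main technical burden is justifying both contour shifts and the term-by-term interchange via vertical-strip bounds. Stirling's formula shows that the exponential factors in the relevant Gamma ratio cancel (a reflection of the balance in \eqref{feq}), leaving polynomial decay with exponent depending on $\rho$ and $r_1$. The hypothesis $\rho>\tf12 r_1-1$ is precisely what ensures both the absolute convergence of the Mellin--Barnes integral on the shifted contour and the absolute convergence of the outer series over $n$; the latter rests on the decay rate of $G_{2(r_1+1),0}^{0,r_1+1}(w)$ as $w\to0$, which is again controlled by the same Stirling estimates.
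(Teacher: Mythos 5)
Your outline gets the formal skeleton right: the residue at $s=\tf12$ (using $L(0,\chi)=0$ for even nonprincipal primitive $\chi$) together with the log-sine evaluation of $L(1,\chi)$ reproduces the second term of \eqref{t1}, and applying \eqref{feq} on a shifted line and reading off the resulting Gamma quotient $\Gamma(\tf12-s)^{r_1+1}/\bigl(\Gamma(s)^{r_1}\Gamma(s+\rho+1)\bigr)$ directly as the Meijer $G$-function of Definition \ref{definition1} is a genuinely attractive shortcut, since it bypasses the iterated Bessel kernel \eqref{K} and the separate evaluation in Theorem \ref{rsub2} that the paper needs (the paper instead quotes Theorems 2 and 4 of \cite{identitiesI} and then converts the kernel to a $G$-function).

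The genuine gap is in your convergence claims, and it sits exactly where the theorem is delicate: the range $\tf12 r_1-1<\rho\le \tf12 r_1$. With $A=r_1+1$, $\delta=\tf12$ and $\beta=\tf12+\epsilon$ (the setting the paper itself records in Section 6), the $G$-series on the right of \eqref{t1} converges \emph{absolutely} only for $\rho>\tf12 r_1$; for smaller $\rho$ it is at best conditionally convergent (for $r_1=1$, $\rho=0$ it is the classical Vorono\"{\dotlessi} series). Likewise, Stirling gives $|t|^{-\rho-1}$ decay on the initial line (so even the opening Perron step with absolute convergence needs $\rho>0$), and on the shifted line the integrand after \eqref{feq} decays like $|t|^{\,r_1/2-\rho-1/2+O(\epsilon)}$, so absolute convergence there needs roughly $\rho>\tf12(r_1+1)$ --- not $\rho>\tf12 r_1-1$. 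Thus your assertion that the hypothesis $\rho>\tf12 r_1-1$ is ``precisely'' what makes the contour shifts and interchanges absolutely convergent is false, and your argument as written only proves \eqref{t1} for $\rho$ larger by at least one unit than claimed. The missing ingredient is a descent mechanism below the absolute-convergence threshold: the paper gets it from Theorem 4 of \cite{identitiesI} (in the Chandrasekharan--Narasimhan framework of \cite{annals2}), which extends the identity to smaller $\rho$ via uniform convergence of the oscillatory kernel series and termwise differentiation/integration in $x$, using the stationary-phase asymptotics of the kernel rather than size estimates. To make your route self-contained on the stated range you would need to supply an equivalent argument (e.g., prove the identity first for large $\rho$, establish uniform convergence of the $G$-series for $\rho>\tf12 r_1-1$ from its large-argument asymptotics, and then descend), or simply restrict your claim to $\rho$ above the absolute-convergence threshold and cite the descent result.
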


\begin{proof}
Using \eqref{feq}, we apply Theorems 2 and 4 in \cite[pp.~351, 356]{identitiesI} with, in the notation of \cite{identitiesI}, 
\begin{equation*}%\label{parameters}
m=r_1+1, ~ r=\frac12,
~ \lambda_n=\mu_n=\dfrac{\pi^{r_1+1}n^2}{q|\Delta_K|},
~a_n=D_{K, \chi}(n), ~ b_n=\dfrac{G(\chi)}{\sqrt{q}}D_{K, \overline{\chi}}(n).
\end{equation*}
Then,  for $x>0$ and $\rho>\tf12 r_1-1$,
\begin{gather}%\label{Eq1}
\frac{1}{\Gamma(\rho+1)}
\sum_{\lambda_n\leq x}D_{K, \chi}(n)(x-\lambda_n)^{\rho} \notag\\
=\frac{G(\chi)}{2^{r_1\rho}\sqrt{q}}
\sum_{n=1}^{\infty}D_{K, \overline{\chi}}(n)\Big(\frac{x}{\mu_n}\Big)^{\frac14+\frac{\rho}{2}}
K_{\frac12+\rho}\big(2^{r_1+1}\sqrt{\mu_{n}x};-\tf12;r_1+1\big)+Q_{\rho}(x).\label{Eq1}
\end{gather}
Here,
\begin{equation*}
Q_{\rho}(x)=\df{1}{2\pi{i}}\int_{C_{\rho}}q^{s}|\Delta_K|^{s}\df{\Gamma(s)\zeta_K(2s)L(2s,\chi)x^{s+\rho}}{\pi^{s(r_1+1)}\Gamma(s+\rho+1)}ds,
\end{equation*}
where $C_{\rho}$ is a positively oriented, closed curve encircling the poles of the integrand,
\begin{align}\label{K}
K_{\tf12+\rho}(x;-\tf12;r_1+1)&=\int_0^{\i}u_{r_1}^{\rho}J_{-\tf12}(u_{r_1})du_{r_1}\int_0^{\i}u_{r_1-1}^{\rho}J_{-\tf12}(u_{r_1-1})du_{r_1-1} \notag\\
&\qquad \qquad \qquad  \ldots\ldots \int_0^{\i}u_{1}^{\rho}J_{-\tf12}(u_{1})J_{\tf12+\rho}(x/u_1u_2\cdots u_{r_1})du_{1}
\end{align}
 \cite[p.~348, Definition 4]{identitiesI}, and $J_{\nu}(z)$ denotes the ordinary Bessel function of order $\nu$.

We first calculate $Q_{\rho}(x).$  The left-hand side of \eqref{evencha} is an entire function, and so $L(0, \chi)=0.$ Therefore, the only pole of the integrand is at $s=\tf12,$ arising from the simple pole of $\zeta_K(2s).$
Thus, using \eqref{res} and $\Gamma(\tf12)=\sqrt{\pi}$, we have
\begin{align*}
Q_{\rho}(x)&=\frac{\sqrt{q|\Delta_K|}\gamma_{-1}(K)x^{\frac12+\rho}}{2\pi^{r_1/2}\Gamma(\rho+\tf32)}L(1, \chi).
%&=\frac{2^{r_1-1}\sqrt{q}x^{\frac12+\rho}Reg_Kh_K}{\pi^{r_1/2}\Gamma(\rho+\tf32)w_K}L(1, \chi),
\end{align*}
Using an evaluation for $L(1, \chi)$ in \cite[p.~182, Equation (3.5)]{CD}, we obtain
\begin{align}\label{Q}
Q_{\rho}(x)&=-\frac{G(\chi)\sqrt{|\Delta_K|}\gamma_{-1}(K)x^{\frac12+\rho}}{2\pi^{r_1/2}\sqrt{q}\Gamma(\rho+\tf32)}
\sum_{n=1}^{q-1}\overline{\chi}(n)\log{|1-\xi_q^n|} \notag\\
&=-\frac{G(\chi)\sqrt{|\Delta_K|}\gamma_{-1}(K)x^{\frac12+\rho}}{2\pi^{r_1/2}\sqrt{q}\Gamma(\rho+\tf32)}
\sum_{n=1}^{q-1}\overline{\chi}(n)\log(2\sin(\pi n/q)),
\end{align}
where we used the identity
\begin{align*}
\log{|1-\xi_q^n|}=\log{|\xi_q^{-n/2}-\xi_q^{n/2}|} =\log{(2\sin(\pi n/q))}.
\end{align*}
Lastly, we employ the identity
\begin{align}\label{mainmain}
K_{\tf12+\rho}(x;-\tf12;r_1+1)&=2^{-\rho+(r_1+1)/2}x^{\rho-1/2}
G_{2(r_1+1),0}^{0,r_1+1}\Big(\frac{4^{r_1+1}}{x^2}\Big| \begin{matrix}\tf12,\tf12,\dots, \tf12,\rho+1,0,\dots, 0\\-\end{matrix}\Big),
\end{align}
where there are $r_1\,\{0\} $'s and $(r_1+1)\, \{\tf12\}$'s,
which we prove in Theorem \ref{rsub2}.

Now, we replace $x$ by $\tf{\pi^{r_1+1}x^2}{q|\Delta_K|}$ in \eqref{Eq1}. Then, by \eqref{Q} and \eqref{mainmain}, we have
\begin{align*}%\label{Eq2}
&\frac{1}{\Gamma(\rho+1)}
\sum_{n\leq x}D_{K, \chi}(n)(x^2-n^2)^{\rho} \notag\\
&=\frac{G(\chi)(q|\Delta_K|)^{\rho}}{(2^{r_1}\pi^{r_1+1})^\rho\sqrt{q}}
\sum_{n=1}^{\infty}D_{K, \overline{\chi}}(n)\Big(\frac{x}{n}\Big)^{\frac12+\rho}
K_{\frac12+\rho}\Big(\frac{(2\pi)^{r_1+1}nx}{q|\Delta_K|};-\tf12;r_1+1\Big) \notag\\
&\qquad -\frac{G(\chi)\sqrt{\pi}\gamma_{-1}(K)x^{1+2\rho}}{2q\Gamma(\rho+\tf32)}
\sum_{n=1}^{q-1}\overline{\chi}(n)\log{(2\sin(\pi n/q))}\notag\\
&=\frac{G(\chi)|\Delta_K|^{1/2}x^{2\rho}}{\pi^{(r_1+1)/2}}\sum_{n=1}^{\infty}\frac{D_{K, \overline{\chi}}(n)}{n}
G_{2(r_1+1),0}^{0,r_1+1}\Big(\frac{q^2|\Delta_K|^2}{\pi^{2(r_1+1)}(nx)^2} 
\Big| \begin{matrix}\tf12,\tf12,\dots, \tf12,\rho+1,0,\dots, 0\\-\end{matrix}\Big)\notag\\
&\qquad -\frac{G(\chi)\sqrt{\pi}\gamma_{-1}(K)x^{1+2\rho}}{2q\Gamma(\rho+\tf32)}
\sum_{n=1}^{q-1}\overline{\chi}(n)\log{(2\sin(\pi n/q))}.
\end{align*}
This completes the proof. 
\end{proof}

The following theorem is an analogue of Vorono\"{\dotlessi}'s formula \eqref{b.vor}.  Note that if $\rho=0$ and $r_1=1$, Theorem \ref{theorem2} reduces to \eqref{b.vor}.  See also \eqref{specialcase} below.

\begin{theorem}\label{theorem2} For $x>0$ and $\rho>\tf12 r_1 -1,$
\begin{align}\label{t2}
&\frac{1}{\Gamma(\rho+1)}
\sum_{n\leq x}D_{K}(n)(x^2-n^2)^{\rho} \notag\\
&=\frac{|\Delta_K|^{1/2}x^{2\rho}}{\pi^{(r_1+1)/2}}\sum_{n=1}^{\infty}\frac{D_{K}(n)}{n}
G_{2(r_1+1),0}^{0,r_1+1}\Big(\frac{|\Delta_K|^2}{\pi^{2(r_1+1)}(nx)^2}
\Big| \begin{matrix}\tf12,\tf12,\dots, \tf12,\rho+1,0,\dots, 0\\-\end{matrix}\Big)\notag\\
&\qquad +\frac{\sqrt{\pi}x^{1+2\rho}}{2\Gamma(\rho+\tf32)}\left\{
\gamma_0(K)+\gamma_{-1}(K)\gamma+\frac{\gamma_{-1}(K)\Gamma'(\tf12)}{2\sqrt{\pi}}\right.\notag \\
&\qquad\qquad\qquad\qquad \qquad    \left.-\frac12\gamma_{-1}(K)\psi(\rho+\tf32)+\gamma_{-1}(K)\log x\right\},
\end{align}
where $\psi(s):=\dfrac{\Gamma'(s)}{\Gamma(s)},$ $\gamma$ denotes Euler's constant, and $\gamma_{-1}(K)$ and $\gamma_0(K)$ are defined in
\eqref{res} and \eqref{dzetaexp}, respectively. Also, if $r_1=1,$ then we have the following additional term on the right-hand side. 
\begin{equation*}
\frac{x^{2\rho}}{4\Gamma(\rho+1)}.
\end{equation*}
\end{theorem}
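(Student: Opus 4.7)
The proof runs parallel to that of Theorem \ref{rsub1}, with the Dirichlet $L$-function replaced by the Riemann zeta function and the modulus $q$ set equal to $1$. First, using \eqref{Lam}, \eqref{Lamfe}, and the functional equation \eqref{zetafe} of $\zeta(s)$, one derives the functional equation of $\zeta_K(2s)\zeta(2s)$, which is the analogue of \eqref{feq} with $q=1$ and $G(\chi)/\sqrt{q}$ set to $1$. Applying Theorems 2 and 4 of \cite{identitiesI} with $m = r_1+1$, $r=\tf12$, $\lambda_n=\mu_n=\pi^{r_1+1}n^2/|\Delta_K|$, and $a_n=b_n=D_K(n)$ then yields an identity of the same shape as \eqref{Eq1}, whose residual contour integral $Q_\rho(X)$ has the same form as in Theorem \ref{rsub1} but with $\zeta_K(2s)\zeta(2s)$ in place of $\zeta_K(2s)L(2s,\chi)$.

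The genuinely new feature is the evaluation of $Q_\rho(X)$. The simple pole of $\zeta_K(2s)$ now combines with the simple pole of $\zeta(2s)$ to produce a \emph{double} pole at $s=\tf12$. Using \eqref{dzetaexp} together with the Laurent expansion $\zeta(s)=1/(s-1)+\gamma+O(s-1)$, a short computation yields
\begin{equation*}
\zeta_K(2s)\zeta(2s)=\frac{\gamma_{-1}(K)/4}{(s-\tf12)^2}+\frac{\gamma_{-1}(K)\gamma+\gamma_0(K)}{2(s-\tf12)}+O(1).
\end{equation*}
The residue at a double pole requires differentiating the remaining factors of the integrand at $s=\tf12$, which brings in $\psi(\tf12)=\Gamma'(\tf12)/\sqrt{\pi}$, $\psi(\rho+\tf32)$, and logarithms of $|\Delta_K|$, $\pi$, and $X$. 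After the substitution $X=\pi^{r_1+1}x^2/|\Delta_K|$ and rescaling by $(\pi^{r_1+1}/|\Delta_K|)^{-\rho}$, as in the final step of the proof of Theorem \ref{rsub1}, the $\log|\Delta_K|$ and $(r_1+1)\log\pi$ contributions cancel, leaving precisely the bracketed expression displayed in \eqref{t2}.

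The integrand has a second potential pole at $s=0$ coming from $\Gamma(s)$. With $r_2=0$, the trivial zero of $\zeta_K(s)$ at $s=0$ has order $r_1-1$ (a consequence of \eqref{Lam}--\eqref{Lamfe}), so for $r_1\ge 2$ the factor $\zeta_K(2s)\zeta(2s)$ vanishes at $s=0$ and kills the simple pole of $\Gamma(s)$, contributing nothing. When $r_1=1$, however, $K=\mathbb{Q}$ and $\zeta_K(0)\zeta(0)=\zeta(0)^2=\tf14$, so the residue at $s=0$ is $X^\rho/(4\Gamma(\rho+1))$, which after the substitution and rescaling becomes exactly the extra term $x^{2\rho}/(4\Gamma(\rho+1))$. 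Finally, the Bessel-to-Meijer identity \eqref{mainmain} converts the inner $K_{\tf12+\rho}$-sum to the Meijer $G$-function form stated in \eqref{t2}. The main technical obstacle is the double-pole residue computation: one must carefully book-keep the digamma and logarithmic contributions and verify that the artifact terms involving $\log|\Delta_K|$ and $\log\pi$ cancel after the substitution, leaving the clean combination displayed in the theorem.
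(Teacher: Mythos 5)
Your proposal is correct and follows essentially the same route as the paper: the functional equation \eqref{feq2}, the application of Theorems 2 and 4 of \cite{identitiesI} with the same choices of $m$, $r$, $\lambda_n$, $a_n$, the double-pole residue at $s=\tf12$ via the Laurent expansions, the extra residue $x^{\rho}/(4\Gamma(\rho+1))$ at $s=0$ when $r_1=1$, and the conversion to Meijer $G$-form via \eqref{mainmain}. Your explicit justification that for $r_1\ge 2$ the order-$(r_1-1)$ zero of $\zeta_K$ at $s=0$ cancels the pole of $\Gamma(s)$ is a welcome detail the paper leaves implicit, but it does not change the argument.
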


\begin{proof}
From \eqref{Lam}, \eqref{Lamfe}, and \eqref{zetafe}, we see that $\zeta_K(2s)\zeta(2s)$ satisfies the functional equation
\begin{align}\label{feq2}
&\pi^{-s(r_1+1)}|\Delta_K|^{s}\Gamma(s)^{r_1+1}\zeta_K(2s)\zeta(2s) \notag \\
&\qquad =\pi^{-\big(\frac{1}{2}-s\big)(r_1+1)}|\Delta_K|^{\frac{1}{2}-s}\Gamma\big(\tf {1}{2}-s\big)^{r_1+1}\zeta_K(1-2s)\zeta(1-2s).
\end{align}
Similarly, we apply Theorems 2 and 4 in \cite{identitiesI} with $$m=r_1+1, ~ r=\frac12,
~ \lambda_n=\mu_n=\dfrac{\pi^{r_1+1}n^2}{|\Delta_K|},
~a_n=b_n=D_{K}(n).$$
Then, we have,
for $x>0$ and $\rho> \tf12 r_1 -1$,
\begin{align}\label{Eq3}
\frac{1}{\Gamma(\rho+1)}
&\sum_{\lambda_n\leq x}D_{K}(n)(x-\lambda_n)^{\rho} \notag\\
&=2^{-r_1\rho}
\sum_{n=1}^{\infty}D_{K}(n)\Big(\frac{x}{\mu_n}\Big)^{\frac14+\frac{\rho}{2}}
K_{\frac12+\rho}\big(2^{r_1+1}\sqrt{\mu_{n}x};-\tf12;r_1+1\big)+Q_{\rho}(x),
\end{align}
where
\begin{equation*}
Q_{\rho}(x)=\df{1}{2\pi{i}}\int_{C_{\rho}} \df{|\Delta_K|^{s}\Gamma(s)\zeta_K(2s)\zeta(2s)x^{s+\rho}}{\pi^{s(r_1+1)}\Gamma(s+\rho+1)}\, ds,
\end{equation*}
and 
where $C_{\rho}$ is a positively oriented, closed curve encircling the poles of the integrand.
Note that each of $\zeta_K(2s)$ and $\zeta(2s)$ has a simple pole at $s=\tf12.$ 
If $r_1=1,$ then the integrand also has a simple pole at $s=0$ arising from $\Gamma(s).$ 
From the following Laurent expansions
\begin{align}
\zeta(2s)&=\frac{\tf12}{s-\tf12}+\gamma+\sum_{n=1}^{\infty}\frac{(-1)^n\gamma_n}{n!}2^n(s-\tf12)^n,\label{zle}\\
\zeta_K(2s)&=\frac{\tf12\gamma_{-1}(K)}{s-\tf12}+\sum_{n=0}^{\infty}\gamma_n(K)2^n(s-\tf12)^n,\\
\Gamma(s)&=\sqrt{\pi}+\Gamma'(\tf12)(s-\tf12)+\cdots,\label{Gle}\\
\frac{1}{\Gamma(s+\rho+1)}&=\frac{1}{\Gamma(\rho+\tf32)}-\frac{\psi(\rho+\tf32)}{\Gamma(\rho+\tf32)}(s-\tf12)+\cdots,\label{1Gle}\\
\Big(\frac{|\Delta_K|x}{\pi^{r_1+1}}\Big)^s&=\Big(\frac{|\Delta_K|x}{\pi^{r_1+1}}\Big)^{1/2}+
\Big(\frac{|\Delta_K|x}{\pi^{r_1+1}}\Big)^{1/2}\log\Big(\frac{|\Delta_K|x}{\pi^{r_1+1}}\Big)(s-\tf12)+\cdots,\notag
\end{align}
we derive
\begin{align}\label{Q2}
Q_{\rho}(x)&=\frac{\sqrt{|\Delta_K|}x^{\rho+\tf12}}{2\Gamma(\rho+\tf32)\pi^{r_1/2}}\left\{
\gamma_0(K)+\gamma_{-1}(K)\gamma+\frac{\gamma_{-1}(K)\Gamma'(\tf12)}{2\sqrt{\pi}}\right.\notag \\
&\qquad\qquad  \left.-\frac12\gamma_{-1}(K)\psi(\rho+\tf32)+\frac12\gamma_{-1}(K)\log\Big(\frac{|\Delta_K|x}{\pi^{r_1+1}}\Big)\right\},
\end{align}
where if $r_1=1,$  $Q_{\rho}(x)$ has an additional term
$$\frac{x^{\rho}}{4\Gamma(\rho+1)}.$$

Now, replacing $x$ by $\tf{\pi^{r_1+1}x^2}{|\Delta_K|}$ in \eqref{Eq3} and using \eqref{Q2} and \eqref{mainmain}, we obtain for $r_1\geq 2,$
\begin{align}
&\frac{1}{\Gamma(\rho+1)}\sum_{n\leq x}D_K(n)\big(x^2-n^2\big)^{\rho}\notag\\
&=\frac{|\Delta_K|^{\rho}}{2^{r_1\rho}\pi^{(r_1+1)\rho}}\sum_{n=1}^{\infty}D_K(n)\Big(\frac{x}{n}\Big)^{\rho+1/2}
K_{\frac12+\rho}\Big(\frac{(2\pi)^{r_1+1}nx}{|\Delta_K|};-\tf12;r_1+1\Big)\notag\\
&\qquad \qquad +\frac{\sqrt{\pi}x^{1+2\rho}}{2\Gamma(\rho+\tf32)}\left\{
\gamma_0(K)+\gamma_{-1}(K)\gamma+\frac{\gamma_{-1}(K)\Gamma'(\tf12)}{2\sqrt{\pi}}\right.
\notag \\
&\qquad\qquad\qquad\qquad \qquad    \left.-\frac12\gamma_{-1}(K)\psi(\rho+\tf32)+\gamma_{-1}(K)\log x\right\}\label{star}\\
&=\frac{|\Delta_K|^{1/2}x^{2\rho}}{\pi^{(r_1+1)/2}}\sum_{n=1}^{\infty}\frac{D_{K}(n)}{n}
G_{2(r_1+1),0}^{0,r_1+1}\Big(\frac{|\Delta_K|^2}{\pi^{2(r_1+1)}(nx)^2}
\Big| \begin{matrix}\tf12,\tf12,\dots, \tf12,\rho+1,0,\dots, 0\\-\end{matrix}\Big)\notag\\
&\qquad \qquad +\frac{\sqrt{\pi}x^{1+2\rho}}{2\Gamma(\rho+\tf32)}\left\{
\gamma_0(K)+\gamma_{-1}(K)\gamma+\frac{\gamma_{-1}(K)\Gamma'(\tf12)}{2\sqrt{\pi}}\right.
\notag \\
&\qquad\qquad\qquad\qquad \qquad    \left.-\frac12\gamma_{-1}(K)\psi(\rho+\tf32)+\gamma_{-1}(K)\log x\right\}.\notag
\end{align}
For $r_1=1,$  we have the following additional term on the right-hand side of the equation above. 
$$\frac{x^{2\rho}}{4\Gamma(\rho+1)}.$$
This completes the proof.
\end{proof}
For convenience, we employ the abbreviation from Definition \ref{definition1}, 
$$G_{2(r_1+1),0}^{0,r_1+1}(x):
=G_{2(r_1+1),0}^{0,r_1+1}\Big(x\Big| \begin{matrix}\tf12,\tf12,\dots, \tf12,\rho+1,0,\dots, 0\\-\end{matrix}\Big).$$

\begin{theorem}\label{theorem3} For $x>0,$ $0<\theta<1$, and $\rho>\tf12 r_1 -1,$
\begin{align}\label{mainmain33}
&\sum_{n\leq x}(x^2-n^2)^{\rho}\sum_{r\mid n}f_K\Big(\frac{n}{r}\Big)\cos(2\pi r\theta) \notag\\
&=\frac{\Gamma(\rho+1)|\Delta_K|^{1/2}x^{2\rho}}{2\pi^{(r_1+1)/2}}\sum_{m=1}^{\infty}\frac{f_K(m)}{m}\notag\\
&\qquad \times
\sum_{n=0}^{\infty}\left\{\frac{G_{2(r_1+1),0}^{0,r_1+1}\Big(\frac{|\Delta_K|^2}{\pi^{2(r_1+1)}(m(n+\theta)x)^2}\Big)}{n+\theta}
+\frac{G_{2(r_1+1),0}^{0,r_1+1}\Big(\frac{|\Delta_K|^2}{\pi^{2(r_1+1)}(m(n+1-\theta)x)^2}\Big)}{n+1-\theta}\right\}\notag\\
&\qquad-\frac{\Gamma(\rho+1)\sqrt{\pi}\gamma_{-1}(K)x^{1+2\rho}}{2\Gamma(\rho+\tf32)}\log{(2\sin(\pi\theta))},
\end{align}
where $f_K(n)$ is defined in \eqref{fK}.
\end{theorem}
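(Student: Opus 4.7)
My plan is to mirror the proof of Theorem \ref{rsub1}, with the Dirichlet $L$-function $L(s,\chi)$ replaced by the cosine Dirichlet series
\[
C_\theta(s):=\sum_{r=1}^\infty\frac{\cos(2\pi r\theta)}{r^s}.
\]
The two required ingredients are a functional equation for $\zeta_K(2s)C_\theta(2s)$, to which Theorems 2 and 4 of \cite{identitiesI} can be applied, and a computation of the residue at $s=\tf12$ that produces the log-sine term.

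To derive the functional equation, I would start from Hurwitz's formula for $\zeta(1-s,\theta)$; summing the copies for $\theta$ and $1-\theta$ and using the duplication and reflection formulas for $\Gamma$ yields
\[
\pi^{-s/2}\Gamma(s/2)\,C_\theta(s)=\tf12\,\pi^{-(1-s)/2}\Gamma((1-s)/2)\,H_\theta(1-s),
\]
where $H_\theta(s):=\zeta(s,\theta)+\zeta(s,1-\theta)$. Multiplying by the functional equation coming from \eqref{Lam}--\eqref{Lamfe} in the case $r_2=0$ and replacing $s$ by $2s$, I obtain
\begin{align*}
&\pi^{-s(r_1+1)}|\Delta_K|^{s}\Gamma(s)^{r_1+1}\,\zeta_K(2s)C_\theta(2s)\\
&\qquad=\tf12\,\pi^{-(1/2-s)(r_1+1)}|\Delta_K|^{1/2-s}\Gamma(\tf12-s)^{r_1+1}\,\zeta_K(1-2s)H_\theta(1-2s),
\end{align*}
which plays the role of \eqref{feq} for $C_\theta$.

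Next, I would apply Theorems 2 and 4 of \cite{identitiesI} with $m=r_1+1$, $r=\tf12$, $\lambda_k=\pi^{r_1+1}k^2/|\Delta_K|$, and $a_k=\sum_{d\mid k}f_K(d)\cos(2\pi(k/d)\theta)$, the $k$-th Dirichlet coefficient of $\zeta_K(s)C_\theta(s)$. On the dual side, expansion of $H_\theta(1-2s)$ produces a double-indexed sum with coefficients $\tf12 f_K(m)$ at frequencies $\mu_{m,n}^{\pm}=\pi^{r_1+1}(m(n+\theta_\pm))^2/|\Delta_K|$, with $\theta_{\pm}\in\{\theta,1-\theta\}$. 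Conversion of the resulting Bessel kernel $K_{1/2+\rho}$ to Meijer $G$-functions via \eqref{mainmain}, followed by the substitution $x\mapsto\pi^{r_1+1}x^2/|\Delta_K|$ used in the proofs of Theorems \ref{rsub1} and \ref{theorem2}, produces precisely the double sum on the right-hand side of \eqref{mainmain33} (after multiplication by $\Gamma(\rho+1)$ to absorb the $1/\Gamma(\rho+1)$ factor coming from \cite{identitiesI}).

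Finally, the residue $Q_\rho(x)$ at the sole pole $s=\tf12$ of the integrand (coming from the simple pole of $\zeta_K(2s)$, since $C_\theta(s)$ is regular at $s=1$ for $0<\theta<1$) is evaluated via the classical identity $C_\theta(1)=-\log(2\sin(\pi\theta))$; after the same substitution, this delivers precisely the $\log(2\sin(\pi\theta))$ term in \eqref{mainmain33}. The main technical obstacle will be ensuring that Theorems 2 and 4 of \cite{identitiesI} apply with non-integer frequencies $m(n+\theta_\pm)$ on the dual side; if those results as stated are restricted to integer-indexed sums, one would split $H_\theta$ into its two Hurwitz-zeta summands and apply the theorems separately to each piece before recombining.
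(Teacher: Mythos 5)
Your proposal is essentially correct, but it follows a genuinely different route from the paper. The paper does not touch the Lerch/Hurwitz functional equation at all: it first reduces to $\theta=h/q$ with $q$ prime, expands $\cos(2\pi rh/q)$ in even Dirichlet characters modulo $q$, and then simply combines the already-proved Theorems \ref{rsub1} and \ref{theorem2} (i.e.\ \eqref{t1} and \eqref{t2}), using the orthogonality relation \eqref{chieven}, the product evaluation \eqref{sin}, and the closed form for $L(1,\chi)$ to reassemble the double sum and the $\log(2\sin(\pi h/q))$ term. You instead build a single functional equation for $\zeta_K(2s)C_\theta(2s)$ against $\zeta_K(1-2s)H_\theta(1-2s)$ via Hurwitz's formula (your constant $\tfrac12$ is correct) and apply Theorems 2 and 4 of \cite{identitiesI} once, with the residue at $s=\tf12$ handled by $C_\theta(1)=-\log(2\sin(\pi\theta))$. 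What your route buys is that it proves \eqref{mainmain33} for every real $\theta\in(0,1)$ in one stroke, bypassing both the character bookkeeping and the reduction ``it suffices to take $\theta=h/q$, $q$ prime,'' which the paper asserts without argument and which otherwise requires a continuity/density step; what the paper's route buys is that it stays entirely within functional equations of standard $L$-functions with integer frequencies and recycles results already established. Two remarks on your caveats: the frameworks of \cite{identitiesI} (like the Chandrasekharan--Narasimhan setting quoted in Section 6) are stated for arbitrary increasing sequences of positive reals, so the non-integral frequencies $m(n+\theta)$, $m(n+1-\theta)$ are admissible and your fallback of splitting $H_\theta$ is unnecessary (and would not help anyway, since splitting does not make the frequencies integral); and your claim that $s=\tf12$ is the sole pole uses $\zeta_K(0)=0$, which holds when $r_1\geq 2$ but fails for $r_1=1$ (i.e.\ $K=\Q$), where a pole of $\Gamma(s)$ at $s=0$ contributes an extra term of the shape $x^{2\rho}/(4\Gamma(\rho+1))$ exactly as in Theorem \ref{theorem2} --- a degenerate case the paper's own statement of Theorem \ref{theorem3} also glosses over. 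Finally, as in the paper's last step, you should justify the regrouping of the dual series into the iterated sum over $m$ and then $n$ appearing in \eqref{mainmain33}.
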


\begin{proof} It suffices to prove \eqref{mainmain33} for $\theta=h/q,$ where $q$ is prime and $0<h<q.$
Using  the identity \cite [Lemma 2.5]{bessel2bbskaz}
\begin{align*}
\cos\Big(\frac{2\pi ha}{q} \Big)=\frac{1}{\phi(q)}\sum_{\substack{\chi \bmod{q}\\ \chi~\textup{even}}}\chi(a)\chi(h)G(\overline{\chi}),
\end{align*}
where $\phi(q)$ is the Euler's $\phi$-function and $G(\chi)$ denotes the Gauss sum in \eqref{gauss},
we deduce that
\begin{align}\label{fKcos}
&\sum_{n\leq x}(x^2-n^2)^{\rho}\sum_{r\mid n}f_K\Big(\frac{n}{r}\Big)\cos\Big(\frac{2\pi rh}{q}\Big) \notag\\
&=\sum_{n\leq x}(x^2-n^2)^{\rho}\left\{\sum_{\substack{r\mid n\\q\mid r}}f_K\Big(\frac{n}{r}\Big)+
\sum_{\substack{r\mid n\\q\nmid r}}f_K\Big(\frac{n}{r}\Big)\cos\Big(\frac{2\pi rh}{q}\Big)\right\}\notag\\
&=\sum_{n\leq x}(x^2-n^2)^{\rho}\left\{\sum_{qm\mid n}f_K\Big(\frac{n}{qm}\Big)+
\frac{1}{\phi(q)}\sum_{\substack{r\mid n\\q\nmid r}}f_K\Big(\frac{n}{r}\Big)
\sum_{\substack{\chi \\ \text{even}}}\chi(h)\chi(r)G(\overline{\chi})\right\}\notag\\
&=\sum_{n\leq x/q}(x^2-q^2n^2)^{\rho}\sum_{d\mid n}f_K(d)
+\frac{1}{\phi(q)}\sum_{\substack{\chi \\ \text{even}}}\chi(h)G(\overline{\chi})
\sum_{n\leq x}(x^2-n^2)^{\rho}\sum_{r\mid n}f_K\Big(\frac{n}{r}\Big)\chi(r) \notag\\
&=q^{2\rho}\sum_{n\leq x/q}\left(\Big(\frac{x}{q}\Big)^2-n^2\right)^{\rho}D_K(n)
+\frac{1}{\phi(q)}\sum_{\substack{\chi \\ \text{even}}}\chi(h)G(\overline{\chi})
\sum_{n\leq x}(x^2-n^2)^{\rho}D_{K, \chi}(n) \notag\\
&=q^{2\rho}\sum_{n\leq x/q}\left(\Big(\frac{x}{q}\Big)^2-n^2\right)^{\rho}D_K(n)
-\frac{1}{\phi(q)}\sum_{n\leq x}(x^2-n^2)^{\rho}D_{K, \chi_0}(n)\notag\\
&\qquad \qquad+\frac{1}{\phi(q)}\sum_{\substack{\chi\neq\chi_0 \\ \text{even}}}\chi(h)G(\overline{\chi})
\sum_{n\leq x}(x^2-n^2)^{\rho}D_{K, \chi}(n),
\end{align}
where $\chi_0$ denotes the principal character modulo $q$. 

Note that
\begin{align}\label{DKchi0}
&\sum_{n\leq x}(x^2-n^2)^{\rho}D_{K, \chi_0}(n)\notag\\
&\qquad =\sum_{n\leq x}(x^2-n^2)^{\rho}\sum_{r\mid n}f_K\Big(\frac{n}{r}\Big)\chi(r) \chi_0(r) \notag\\
&\qquad =\sum_{n\leq x}(x^2-n^2)^{\rho}\sum_{\substack{r\mid n\\q\nmid r}}f_K\Big(\frac{n}{r}\Big) \notag\\
&\qquad=\sum_{n\leq x}(x^2-n^2)^{\rho}\left\{\sum_{r\mid n}f_K\Big(\frac{n}{r}\Big)-\sum_{q\mid r\mid n}f_K\Big(\frac{n}{r}\Big)\right\}\notag\\
&\qquad=\sum_{n\leq x}(x^2-n^2)^{\rho}\sum_{r\mid n}f_K\Big(\frac{n}{r}\Big)-
\sum_{n\leq x/q}(x^2-q^2n^2)^{\rho}\sum_{d\mid n}f_K\Big(\frac{n}{d}\Big)  \notag\\
&\qquad=\sum_{n\leq x}(x^2-n^2)^{\rho}D_K(n)-q^{2\rho}\sum_{n\leq x/q}\left(\Big(\frac{x}{q}\Big)^2-n^2\right)^{\rho}D_K(n).
\end{align}
Thus, using \eqref{t1}, \eqref{t2}, and \eqref{DKchi0} in \eqref{fKcos}, we have
{\allowdisplaybreaks\begin{align}
&\sum_{n\leq x}(x^2-n^2)^{\rho}\sum_{r\mid n}f_K\Big(\frac{n}{r}\Big)\cos\Big(\frac{2\pi rh}{q}\Big) \notag\\
=&\frac{q^{2\rho+1}}{\phi(q)}\sum_{n\leq x/q}\left(\Big(\frac{x}{q}\Big)^2-n^2\right)^{\rho}D_K(n)
-\frac{1}{\phi(q)}\sum_{n\leq x}(x^2-n^2)^{\rho}D_K(n)\label{fKcos2}\\
&\qquad+\frac{1}{\phi(q)}\sum_{\substack{\chi\neq\chi_0 \\ \text{even}}}\chi(h)G(\overline{\chi})
\sum_{n\leq x}(x^2-n^2)^{\rho}D_{K, \chi}(n)\notag\\
=&\frac{q^{2\rho+1}\Gamma(\rho+1)}{\phi(q)}\left\{\frac{|\Delta_K|^{1/2}x^{2\rho}}{q^{2\rho}\pi^{(r_1+1)/2}}\sum_{n=1}^{\infty}\frac{D_{K}(n)}{n}
G_{2(r_1+1),0}^{0,r_1+1}\Big(\frac{|\Delta_K|^2q^2}{\pi^{2(r_1+1)}(nx)^2}\Big)\right.\notag\\
&\qquad \qquad +\left.\frac{\sqrt{\pi}x^{1+2\rho}\gamma_{-1}(K)}{2q^{2\rho+1}\Gamma(\rho+\tf32)}
\Big(\gamma+\frac{\gamma_0(K)}{\gamma_{-1}(K)}+
\frac{\Gamma'(\tf12)}{2\sqrt{\pi}}-\frac12\psi(\rho+\tf32)+\log \big(\tf{x}{q}\big)
\Big)\right\}\notag\\
&-\frac{\Gamma(\rho+1)}{\phi(q)}\left\{\frac{|\Delta_K|^{1/2}x^{2\rho}}{\pi^{(r_1+1)/2}}\sum_{n=1}^{\infty}\frac{D_{K}(n)}{n}
G_{2(r_1+1),0}^{0,r_1+1}\Big(\frac{|\Delta_K|^2}{\pi^{2(r_1+1)}(nx)^2}\Big)\right.\notag\\
&\qquad \qquad +\left.\frac{\sqrt{\pi}x^{1+2\rho}\gamma_{-1}(K)}{2\Gamma(\rho+\tf32)}
\Big(\gamma+\frac{\gamma_0(K)}{\gamma_{-1}(K)}+
\frac{\Gamma'(\tf12)}{2\sqrt{\pi}}-\frac12\psi(\rho+\tf32)+\log x\Big)\right\}\notag\\
&+\frac{\Gamma(\rho+1)}{\phi(q)}\sum_{\substack{\chi\neq\chi_0 \\ \text{even}}}\chi(h)G(\overline{\chi})\notag\\
&\times
\left\{\frac{G(\chi)|\Delta_K|^{1/2}x^{2\rho}}{\pi^{(r_1+1)/2}}\sum_{n=1}^{\infty}\frac{D_{K, \overline{\chi}}(n)}{n}
G_{2(r_1+1),0}^{0,r_1+1}\Big(\frac{q^2|\Delta_K|^2}{\pi^{2(r_1+1)}(nx)^2}\Big)\right.\notag\\
&\qquad \left.-\frac{G(\chi)\sqrt{\pi}\gamma_{-1}(K)x^{1+2\rho}}{2q\Gamma(\rho+\tf32)}
\sum_{n=1}^{q-1}\overline{\chi}(n)\log{(2\sin(\pi n/q))}\right\}\notag\\
=&\frac{\Gamma(\rho+1)|\Delta_K|^{1/2}x^{2\rho}q}{\phi(q)\pi^{(r_1+1)/2}}\sum_{n=1}^{\infty}\frac{D_{K}(n)}{n}
G_{2(r_1+1),0}^{0,r_1+1}\Big(\frac{|\Delta_K|^2q^2}{\pi^{2(r_1+1)}(nx)^2}\Big)\notag\\
&-\frac{\Gamma(\rho+1)|\Delta_K|^{1/2}x^{2\rho}}{\phi(q)\pi^{(r_1+1)/2}}\sum_{n=1}^{\infty}\frac{D_{K}(n)}{n}
G_{2(r_1+1),0}^{0,r_1+1}\Big(\frac{|\Delta_K|^2}{\pi^{2(r_1+1)}(nx)^2}\Big)\notag\\
&-\frac{\Gamma(\rho+1)\sqrt{\pi}\gamma_{-1}(K)x^{1+2\rho}}{2\phi(q)\Gamma(\rho+\tf32)}\log q\notag \\
&+\frac{\Gamma(\rho+1)q}{\phi(q)}\sum_{\substack{\chi \\ \text{even}}}\chi(h)\notag\\
&\times
\left\{\frac{|\Delta_K|^{1/2}x^{2\rho}}{\pi^{(r_1+1)/2}}\sum_{n=1}^{\infty}\frac{D_{K, \overline{\chi}}(n)}{n}
G_{2(r_1+1),0}^{0,r_1+1}\Big(\frac{q^2|\Delta_K|^2}{\pi^{2(r_1+1)}(nx)^2}\Big)\right.\notag\\
&\qquad \left.-\frac{\sqrt{\pi}\gamma_{-1}(K)x^{1+2\rho}}{2q\Gamma(\rho+\tf32)}
\sum_{n=1}^{q-1}\overline{\chi}(n)\log{(2\sin(\pi n/q))}\right\}\notag\\
&-\frac{\Gamma(\rho+1)q}{\phi(q)}\frac{|\Delta_K|^{1/2}x^{2\rho}}{\pi^{(r_1+1)/2}}\sum_{n=1}^{\infty}\frac{D_{K, \overline{\chi}_0}(n)}{n}
G_{2(r_1+1),0}^{0,r_1+1}\Big(\frac{q^2|\Delta_K|^2}{\pi^{2(r_1+1)}(nx)^2}\Big)\notag\\
&+\frac{\Gamma(\rho+1)}{\phi(q)}\frac{\sqrt{\pi}\gamma_{-1}(K)x^{1+2\rho}}{2\Gamma(\rho+\tf32)}
\sum_{n=1}^{q-1}\log{(2\sin(\pi n/q))}.
\end{align}}
Employing the identity,
 $$D_{K, \overline{\chi}_0}(n)=D_K(n)-D_K\Big(\frac{n}{q}\Big),$$
  the formula \cite[p. 41, Formula 1.392, no. 1]{grad},
 \begin{equation}\label{sin}
\prod_{n=1}^{q-1}\sin\Big(\frac{\pi n}{q}\Big)=\frac{q}{2^{q-1}},
\end{equation}
and \cite[equation (3.7)]{besselII}
\begin{equation}\label{chieven}
\sum_{\substack{\chi \\ \text{even}}}\chi(a)\overline{\chi}(h)=
\begin{cases} \phi(q)/2 & \text{if}, \, \,  h=\pm a \pmod{q},\\
0, & \text{otherwise},
\end{cases}
\end{equation}
we find that
{\allowdisplaybreaks\begin{align*}
&\sum_{n\leq x}(x^2-n^2)^{\rho}\sum_{r\mid n}f_K\Big(\frac{n}{r}\Big)\cos\Big(\frac{2\pi rh}{q}\Big) \notag\\
=&\frac{\Gamma(\rho+1)q|\Delta_K|^{1/2}x^{2\rho}}{\phi(q)\pi^{(r_1+1)/2}}\sum_{\substack{\chi \\ \text{even}}}\chi(h)
\sum_{n=1}^{\infty}\frac{D_{K, \overline{\chi}}(n)}{n}
G_{2(r_1+1),0}^{0,r_1+1}\Big(\frac{q^2|\Delta_K|^2}{\pi^{2(r_1+1)}(nx)^2}\Big)\notag\\
&-\frac{\Gamma(\rho+1)\sqrt{\pi}\gamma_{-1}(K)x^{1+2\rho}}{2\phi(q)\Gamma(\rho+\tf32)}\log q\\
&\qquad -\frac{\Gamma(\rho+1)\sqrt{\pi}\gamma_{-1}(K)x^{1+2\rho}}{2\phi(q)\Gamma(\rho+\tf32)}
\left\{\sum_{\substack{\chi \\ \text{even}}}\chi(h)
\sum_{n=1}^{q-1}\overline{\chi}(n)\log{(2\sin(\pi n/q))}-\log q\right\}\notag \\
=&\frac{\Gamma(\rho+1)q|\Delta_K|^{1/2}x^{2\rho}}{\phi(q)\pi^{(r_1+1)/2}}\sum_{\substack{\chi \\ \text{even}}}\chi(h)
\sum_{n=1}^{\infty}\frac{1}{n}\sum_{d\mid n}f_K\Big(\frac{n}{d}\Big)\overline{\chi}(d)
G_{2(r_1+1),0}^{0,r_1+1}\Big(\frac{q^2|\Delta_K|^2}{\pi^{2(r_1+1)}(nx)^2}\Big)\notag\\
&\qquad -\frac{\Gamma(\rho+1)\sqrt{\pi}\gamma_{-1}(K)x^{1+2\rho}}{2\phi(q)\Gamma(\rho+\tf32)}
\left\{\sum_{n=1}^{q-1}\log{(2\sin(\pi n/q))}\sum_{\substack{\chi \\ \text{even}}}\chi(h)\overline{\chi}(n)\right\}\notag \\
=&\frac{\Gamma(\rho+1)q|\Delta_K|^{1/2}x^{2\rho}}{\phi(q)\pi^{(r_1+1)/2}}\sum_{m, d=1}^{\infty}\frac{f_K(m)}{md}
\sum_{\substack{\chi \\ \text{even}}}\chi(h)\overline{\chi}(d)
G_{2(r_1+1),0}^{0,r_1+1}\Big(\frac{q^2|\Delta_K|^2}{\pi^{2(r_1+1)}(mdx)^2}\Big)\notag\\
&\qquad-\frac{\Gamma(\rho+1)\sqrt{\pi}\gamma_{-1}(K)x^{1+2\rho}}{2\Gamma(\rho+\tf32)}\log{(2\sin(\pi h/q))}\notag\\
=&\frac{\Gamma(\rho+1)|\Delta_K|^{1/2}x^{2\rho}}{2\pi^{(r_1+1)/2}}\sum_{m=1}^{\infty}\frac{f_K(m)}{m}\notag\\
&\qquad \times
\sum_{n=0}^{\infty}\left\{\frac{G_{2(r_1+1),0}^{0,r_1+1}\Big(\frac{|\Delta_K|^2}{\pi^{2(r_1+1)}(m(n+h/q)x)^2}\Big)}{n+h/q}
+\frac{G_{2(r_1+1),0}^{0,r_1+1}\Big(\frac{|\Delta_K|^2}{\pi^{2(r_1+1)}(m(n+1-h/q)x)^2}\Big)}{n+1-h/q}\right\}\notag\\
&\qquad-\frac{\Gamma(\rho+1)\sqrt{\pi}\gamma_{-1}(K)x^{1+2\rho}}{2\Gamma(\rho+\tf32)}\log{(2\sin(\pi h/q))}.
\end{align*}}
This completes the proof.
\end{proof}

\section{Dirichlet's Theorem}
We state a classical theorem of Dirichlet \cite{dirichlet}.

\begin{theorem}\label{dir} Let $\left(\tf{\cdot}{m}\right)$ denote the Kronecker symbol, where $m$ is any positive integer \cite[p.~39]{davenport}.  Let $n$ be a positive integer coprime with the discriminant $D$ of a positive-definite primitive integral binary quadratic form.  Let $R_{D}(n)$ denote the number of all representations of $n$ by a representative set of positive-definite primitive integral binary quadratic forms of discriminant $D$.  Then
\begin{equation}\label{R}
R_D(n)=w_D\sum_{k|n}\left(\df{D}{k}\right),
\end{equation}
where
\begin{equation}\label{w}
w_D=\begin{cases}2, &\text{if \, $D<-4,$}\\
4, &\text{if \, $D=-4$,}\\
6, &\text{if \, $D=-3$}.
\end{cases}
\end{equation}
\end{theorem}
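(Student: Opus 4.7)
Since Theorem \ref{dir} is a classical result of Dirichlet, I would establish it by combining Gauss's correspondence between binary quadratic forms and ideal classes with the Euler-product factorization of the Dedekind zeta function, which fits naturally into the framework of this paper. Let $K = \Q(\sqrt{D})$ and let $\mathcal{O}$ denote the quadratic order of discriminant $D$ in $K$, so that $\mathcal{O}$ coincides with $\mathcal{O}_K$ precisely when $D$ is a fundamental discriminant.

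First I would invoke Gauss's bijection between $\mathrm{SL}_2(\Z)$-equivalence classes of primitive positive-definite integral binary quadratic forms of discriminant $D$ and proper ideal classes of $\mathcal{O}$, under which $f(x,y) = ax^2 + bxy + cy^2$ corresponds to the ideal $\mathfrak{a} = [a,(-b+\sqrt{D})/2]$ of norm $a$. A short direct computation shows that $\alpha := ax + \tfrac{-b+\sqrt{D}}{2} y \in \mathfrak{a}$ satisfies $N(\alpha) = a\, f(x,y)$, so that each representation $f(x,y) = n$ produces an integral ideal $(\alpha)\mathfrak{a}^{-1}$ of norm $n$ in a fixed ideal class, and conversely every such ideal arises in this way. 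Two representations $(x,y)$ yield the same ideal exactly when the corresponding elements $\alpha$ differ by a unit of $\mathcal{O}$; in the imaginary quadratic case the unit group has order $w_D$ as in \eqref{w}. Summing over all ideal classes, the total count $R_D(n)$ equals $w_D$ times the total number of integral ideals of $\mathcal{O}$ of norm $n$.

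Next I would count ideals of norm $n$. The coprimality $\gcd(n,D) = 1$ ensures that the contributing primes are unramified in $K$ and coprime to the conductor of $\mathcal{O}$ in $\mathcal{O}_K$, so the count agrees with $f_K(n)$ as defined in \eqref{fK}. The Euler-product factorization
\begin{equation*}
\zeta_K(s) = \zeta(s) L(s,\chi_D), \qquad \chi_D(k) := \left(\dfrac{D}{k}\right),
\end{equation*}
(obtained by splitting primes as split, inert, or ramified and matching local factors with those of $\chi_D$) yields upon multiplying Dirichlet series the divisor-sum formula
\begin{equation*}
f_K(n) = \sum_{k\mid n}\chi_D(k) = \sum_{k\mid n}\left(\dfrac{D}{k}\right).
\end{equation*}
Combining this with the preceding paragraph gives the identity \eqref{R}.

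The main obstacle will be making the forms-to-ideals correspondence precise enough to pin down the multiplicity as \emph{exactly} $w_D$, and to cover the non-maximal order case when $D$ is not fundamental. The hypothesis $\gcd(n,D) = 1$ is precisely what rescues the argument: it excludes ramified primes and the primes dividing the conductor, so ideals of $\mathcal{O}$ of norm $n$ are invertible and behave as in $\mathcal{O}_K$, removing the need for delicate correction factors, while the finite unit group of $\mathcal{O}$ acts freely on generators of each principal ideal and so contributes the clean factor $w_D$.
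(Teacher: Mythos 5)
Your outline is essentially correct, but note that the paper does not prove Theorem \ref{dir} at all: it is quoted as a classical result with a citation to Dirichlet's 1840 paper, so there is no in-paper argument to match yours against. Your route—Gauss's correspondence between classes of primitive positive-definite forms of discriminant $D$ and proper ideal classes of the order $\mathcal{O}$ of discriminant $D$, the free action of the unit group of order $w_D$ on generators, and then counting ideals of norm $n$ via the splitting of the zeta function—is the standard modern proof and fits well with the $f_K(n)$ framework used elsewhere in the paper; Dirichlet's original treatment was more elementary, counting representations directly through congruence and genus-type arguments without ideal theory. One point to state more carefully: the displayed factorization $\zeta_K(s)=\zeta(s)L(s,\chi_D)$ with $\chi_D(k)=\left(\tf{D}{k}\right)$ is literally valid only when $D$ is fundamental; for $D=D_0f^2$ non-fundamental the factorization involves the primitive character $\chi_{D_0}$, and what rescues the identity \eqref{R} is exactly the hypothesis $\gcd(n,D)=1$, which forces every $k\mid n$ to be coprime to $f$, whence $\left(\tf{D}{k}\right)=\left(\tf{D_0}{k}\right)$ and the invertible $\mathcal{O}$-ideals of norm $n$ are in norm-preserving bijection with $\mathcal{O}_K$-ideals of norm $n$. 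You flag this non-maximal-order issue yourself, so it is a matter of filling in the details rather than a gap in the strategy; with those details (including the check that the representation-to-ideal map is a bijection onto ideals in the inverse class, with multiplicity exactly $w_D$), the argument is complete.
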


The following is Theorem 2.2.15 in \cite{HC}.
\begin{theorem}\label{theorem4.2}
If $D$ is a fundamental discriminant, the Kronecker symbol $\left(\tf{D}{n}\right)$ defines a real primitive character modulo $|D|.$
Conversely, if $\chi$ is a real  primitive character modulo $m,$ then $D=\chi(-1)m$ is a fundamental discriminant
and $\chi(n)=\left(\tf{D}{n}\right)$.
\end{theorem}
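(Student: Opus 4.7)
The plan is to prove each direction using the multiplicative factorization of Dirichlet characters via the Chinese Remainder Theorem (CRT), and then to match local factors with Kronecker symbols of \emph{prime discriminants}. Throughout I would treat the Kronecker symbol as multiplicative in its lower argument and invoke the supplementary rules at $p = 2$ together with quadratic reciprocity for the odd primes.

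For the forward direction, assume $D$ is a fundamental discriminant, so either $D$ is squarefree with $D \equiv 1 \pmod 4$, or $D = 4m$ with $m$ squarefree and $m \equiv 2, 3 \pmod 4$. My first task is to verify that $n \mapsto \left(\tf{D}{n}\right)$ is completely multiplicative in $n$, vanishes exactly when $\gcd(n, D) > 1$, and is periodic modulo $|D|$; all of these follow from standard properties of the Kronecker symbol. The nontrivial point is \emph{primitivity}: for each prime $p \mid |D|$ I must produce an integer $a$ coprime to $|D|$ with $a \equiv 1 \pmod{|D|/p}$ and $\left(\tf{D}{a}\right) = -1$. I would construct such $a$ by CRT, taking a quadratic nonresidue modulo $p$ for odd $p$, or a suitable residue class modulo $8$ when $p = 2$, and then pasting with $1$ modulo the remaining prime divisors. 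The squarefreeness built into the definition of a fundamental discriminant guarantees that this local modification is not cancelled by the supplementary factors at the other primes.

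For the converse, given a real primitive character $\chi$ modulo $m$, the CRT decomposition yields $\chi = \prod_{p^a \| m} \chi_p$, where each $\chi_p$ is a primitive real character of $(\Z/p^a\Z)^*$. I would then classify such local characters: for odd $p$ the group is cyclic, so only $a = 1$ admits a primitive real character, namely the Legendre symbol $\left(\tf{\cdot}{p}\right)$; for $p = 2$, using $(\Z/2^a\Z)^* \simeq \Z/2 \times \Z/2^{a-2}$ when $a \geq 3$, direct enumeration forces $a \in \{2, 3\}$ and yields exactly three primitive real characters, matching the prime discriminants $D_p \in \{-4, 8, -8\}$. Identifying each $\chi_p$ with the Kronecker symbol $\left(\tf{D_p}{\cdot}\right)$ of the appropriate prime discriminant and multiplying produces $\chi(n) = \left(\tf{D'}{n}\right)$ with $D' = \prod_p D_p$, which is fundamental by construction and satisfies $|D'| = m$. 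To finish, I would compute $\chi(-1) = \prod_p \chi_p(-1)$ in closed form and compare with the sign of $D'$ coming from the local contributions to conclude $D' = \chi(-1) m$.

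The hardest step will be the $p = 2$ local analysis in the converse, where $(\Z/2^a\Z)^*$ is non-cyclic and the sign bookkeeping is delicate. Pinning down $\chi_p(-1)$ in each of the three cases $D_p \in \{-4, 8, -8\}$, and verifying that the product of local contributions yields $\chi(-1) m$ rather than $-\chi(-1) m$, is the computation that ties the entire argument together; once that sign analysis is settled, assembling everything via CRT and quadratic reciprocity gives both directions.
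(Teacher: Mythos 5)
The paper does not prove this statement at all: it is quoted verbatim as Theorem 2.2.15 of Cohen's \emph{Number Theory} (reference \cite{HC}), so there is no internal argument to compare against. Your sketch is essentially the standard textbook proof of that cited result, and its skeleton is sound: decompose a real primitive character modulo $m$ via CRT into real primitive characters of prime-power conductor, classify those local characters (Legendre symbol for odd $p$, and the three characters of conductor $4$ and $8$ corresponding to the prime discriminants $-4,\,8,\,-8$), identify each local factor with a Kronecker symbol $\left(\tf{D_p}{\cdot}\right)$ using quadratic reciprocity, and observe that the sign of each prime discriminant equals $\chi_p(-1)$, so that $D'=\prod_p D_p$ has $|D'|=m$ and $\operatorname{sign}(D')=\chi(-1)$, i.e.\ $D'=\chi(-1)m$. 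Two remarks. First, your forward direction would be cleaner if you used the same device as your converse: every fundamental discriminant factors uniquely into pairwise coprime prime discriminants, $D=\prod_p D_p$, the Kronecker symbol factors accordingly, and a product of primitive characters to pairwise coprime moduli is primitive to the product modulus; this avoids the somewhat delicate ad hoc CRT construction of an element $a\equiv 1\pmod{|D|/p}$ with $\left(\tf{D}{a}\right)=-1$, which at $p=2$ (when $4\,\|\,D$ or $8\mid D$) needs exactly the case analysis you are trying to sidestep. Second, as written this is a plan rather than a proof: the odd-$p$ claim that only conductor $p$ carries a primitive real character (because the unique quadratic character mod $p^a$ is induced from mod $p$), the exclusion of conductors $2$ and $2^a$ with $a\geq 4$, and the sign bookkeeping $\chi_p(-1)=\operatorname{sign}(D_p)$ in all three even cases still have to be carried out, though none of them presents an obstacle. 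In short, your route is correct and is, in substance, the proof the paper delegates to its reference.
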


Thus, if $D$ is a fundamental discriminant,  we can rewrite $R_D(n)$ as
\begin{equation*}
R_D(n)=w_D\sum_{k|n}\chi_{D}(k)=w_Dd_{\chi_{D}}(n),
\end{equation*}
where
\begin{equation*}
d_{\chi}(n):=\sum_{k|n}\chi(k).
\end{equation*}
Note that, for $\sigma>1$, 
\begin{align*}
\zeta(s)L(s, \chi_{D})=\sum_{n=1}^{\infty}\frac{1}{n^s}\sum_{n=1}^{\infty}\frac{\chi_{D}(n)}{n^s}=
\sum_{n=1}^{\infty}\frac{d_{\chi_{D}}(n)}{n^s}.
\end{align*}

Define
\begin{align*}
\mathscr{D}_{D, \chi}(n):=\sum_{k\mid n}d_{\chi_{D}}(k)\chi(n/k), \quad \text{and}\quad
\mathscr{D}_{D}(n):=\sum_{k\mid n}d_{\chi_{D}}(k).
\end{align*}
Also,
\begin{align*}
\mathscr{D}_{\overline{D}, \overline{\chi}}(n):=\sum_{k\mid n}d_{\overline{\chi}_{D}}(k)\overline{\chi}(n/k), \quad \text{and}\quad
\mathscr{D}_{\overline{D}}(n):=\sum_{k\mid n}d_{\overline{\chi}_{D}}(k).
\end{align*}
Then,
\begin{align*}
\zeta(s)L(s, \chi_{D})L(s, \chi)=\sum_{n=1}^{\infty}\frac{d_{\chi_{D}}(n)}{n^s}\sum_{n=1}^{\infty}\frac{\chi(n)}{n^s}
=\sum_{n=1}^{\infty}\frac{\mathscr{D}_{D, \chi}(n)}{n^s},
\end{align*}
and
\begin{align*}
\zeta^2(s)L(s, \chi_{D})=\sum_{n=1}^{\infty}\frac{1}{n^s}\sum_{n=1}^{\infty}\frac{d_{\chi_{D}}(n)}{n^s}
=\sum_{n=1}^{\infty}\frac{\mathscr{D}_{D}(n)}{n^s}.
\end{align*}

\section{The Case: $\chi_{D}$ even}
\begin{theorem}\label{theorem5.1}
Let $\chi_{D}$ be a nonprincipal even primitive character modulo $|D|$, where $D$ is a
fundamental discriminant.
 Let $q$ be a positive integer, and let $\chi$ be a nonprincipal even primitive character modulo $q.$
Then, for $x>0$ and $\rho>0,$
\begin{align}\label{dDchimain}
&\frac{1}{\Gamma(\rho+1)}
\sum_{n\leq x}\mathscr{D}_{D, \chi}(n)(x^2-n^2)^{\rho} \notag\\
&=\frac{G(\chi)G(\chi_{D})x^{2\rho}}{\pi^{3/2}}\sum_{n=1}^{\infty}\frac{\mathscr{D}_{\overline{D}, \overline{\chi}}(n)}{n}
G_{6,0}^{0,3}\Big(\frac{D^2q^2}{\pi^{6}(nx)^2}\Big)\notag\\
&\qquad -\frac{G(\chi)\sqrt{\pi}L(1, \chi_{D})x^{1+2\rho}}{2q\Gamma(\rho+\tf32)}
\sum_{n=1}^{q-1}\overline{\chi}(n)\log{(2\sin(\pi n/q))}.
\end{align}
\end{theorem}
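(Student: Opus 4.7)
The plan is to mimic the proof of Theorem~\ref{rsub1} with the product $\zeta_K(2s)L(2s,\chi)$ replaced by $\zeta(2s)L(2s,\chi_D)L(2s,\chi)$. The first step is to combine the functional equation \eqref{zetafe} for $\zeta(s)$ with two copies of \eqref{evencha}---one for the even primitive character $\chi_D$ of modulus $|D|$, one for the even primitive character $\chi$ of modulus $q$. Replacing $s$ by $2s$ throughout and multiplying the three functional equations, I obtain
\begin{align*}
&\pi^{-3s}(q|D|)^{s}\Gamma(s)^{3}\zeta(2s)L(2s,\chi_D)L(2s,\chi)\\
&\quad=\frac{G(\chi_D)G(\chi)}{\sqrt{q|D|}}\pi^{-3(\tf12-s)}(q|D|)^{\tf12-s}\Gamma(\tf12-s)^{3}\zeta(1-2s)L(1-2s,\overline{\chi}_D)L(1-2s,\overline{\chi}),
\end{align*}
which has exactly the form of \eqref{feq} with $r_1+1$ taken to be $3$ and $|\Delta_K|$ replaced by $|D|$, up to the extra Gauss-sum factor $G(\chi_D)$.

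With this functional equation in hand, I would apply Theorems 2 and 4 of \cite{identitiesI} with $m=3$, $r=\tf12$, $\lambda_n=\mu_n=\pi^{3}n^{2}/(q|D|)$, $a_n=\mathscr{D}_{D,\chi}(n)$, and $b_n=(G(\chi_D)G(\chi)/\sqrt{q|D|})\mathscr{D}_{\overline{D},\overline{\chi}}(n)$. The output is a Riesz-type identity as in \eqref{Eq1}, with kernel $K_{\tf12+\rho}(\,\cdot\,;-\tf12;3)$ and a residue term
\[
Q_{\rho}(x)=\frac{1}{2\pi i}\int_{C_{\rho}}\frac{(q|D|)^{s}\Gamma(s)\zeta(2s)L(2s,\chi_D)L(2s,\chi)\,x^{s+\rho}}{\pi^{3s}\Gamma(s+\rho+1)}\,ds.
\]
For the residue analysis, the key observation is that since $\chi_D$ and $\chi$ are even primitive and nonprincipal, both completed $L$-functions from \eqref{evencha} are entire, whence $L(0,\chi_D)=L(0,\chi)=0$. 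Consequently, the simple pole of $\Gamma(s)$ at $s=0$ is killed by the double zero of $L(2s,\chi_D)L(2s,\chi)$ there, and the only surviving pole inside $C_{\rho}$ is the simple pole of $\zeta(2s)$ at $s=\tf12$. Evaluating, and using $\Gamma(\tf12)=\sqrt{\pi}$ together with $\Res_{s=1/2}\zeta(2s)=\tf12$, yields
\[
Q_{\rho}(x)=\frac{\sqrt{q|D|}\,L(1,\chi_D)L(1,\chi)\,x^{1/2+\rho}}{2\pi\,\Gamma(\rho+\tf32)},
\]
after which the standard evaluation $L(1,\chi)=-(G(\chi)/q)\sum_{n=1}^{q-1}\overline{\chi}(n)\log(2\sin(\pi n/q))$ from \cite[p.~182, Eq.~(3.5)]{CD} recasts $L(1,\chi)$ into the required log-sum form.

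To finish, I would substitute $x\mapsto \pi^{3}x^{2}/(q|D|)$ in the resulting identity and invoke the Bessel--Meijer conversion \eqref{mainmain} with $r_1+1=3$ to rewrite the kernel as $G_{6,0}^{0,3}$. The powers of $\pi$, $q$, and $|D|$ then collapse exactly as they did at the end of the proof of Theorem~\ref{rsub1}, producing \eqref{dDchimain}. The bulk of the genuinely new content is the residue analysis just described: the cancellation of the potential $s=0$ pole is what permits one to pass from a three-factor $L$-product to the single clean $Q_{\rho}$ term above, and I expect this to be the main conceptual obstacle; once it is in place, the rest of the argument is structurally parallel to Theorem~\ref{rsub1} and amounts to routine bookkeeping.
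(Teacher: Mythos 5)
Your proposal is correct and matches the paper's own proof essentially step for step: the same functional equation for $\zeta(2s)L(2s,\chi_D)L(2s,\chi)$, the same application of Theorems 2 and 4 of \cite{identitiesI} with $m=3$, $\lambda_n=\mu_n=\pi^3 n^2/(|D|q)$, the same residue computation at $s=\tf12$ (the paper likewise uses $L(0,\chi_D)=L(0,\chi)=0$ implicitly to rule out a pole at $s=0$), the same evaluation of $L(1,\chi)$ from \cite{CD}, and the same substitution $x\mapsto \pi^3x^2/(|D|q)$ followed by \eqref{mainmain} with $r_1+1=3$. No gaps; your $Q_\rho(x)$ agrees exactly with \eqref{Q3}.
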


\begin{proof}
By \eqref{zetafe} and \eqref{evencha}, we see that $\zeta(2s)L(2s, \chi_{D})L(2s, \chi)$ satisfies the functional equation
\begin{align*}
&\pi^{-3s}(|D|q)^s\Gamma^3(s)\zeta(2s)L(2s, \chi_{D})L(2s, \chi) \notag\\
&=\frac{G(\chi)G(\chi_{D})}{\sqrt{|D|q}}\pi^{-3(\tf12-s)}(|D|q)^{(\tf12-s)}\Gamma^3(\tf12-s)\zeta(1-2s)L(1-2s, \chi_{D})L(1-2s, \chi).
\end{align*}
We apply Theorems 2 and 4 in \cite{identitiesI} with $$m=3, ~ r=\frac12,
~ \lambda_n=\mu_n=\dfrac{n^2\pi^3}{|D|q},
~a_n=\mathscr{D}_{D, \chi}(n)  \quad\text{and}\quad  b_n=\dfrac{G(\chi_{D})G(\chi)}{\sqrt{|D|q}}\mathscr{D}_{\overline{D}, \overline{\chi}}(n).$$
Then, we obtain, for $x>0$ and $\rho>0$,
\begin{align}\label{dDchi}
\frac{1}{\Gamma(\rho+1)}
&\sum_{\lambda_n\leq x}\mathscr{D}_{D, \chi}(n)(x-\lambda_n)^{\rho} \notag\\
&=\frac{G(\chi_{D})G(\chi)}{2^{2\rho}\sqrt{|D|q}}
\sum_{n=1}^{\infty}\mathscr{D}_{\overline{D}, \overline{\chi}}(n)\Big(\frac{x}{\mu_n}\Big)^{\frac14+\frac{\rho}{2}}
K_{\frac12+\rho}\big(2^{3}\sqrt{\mu_{n}x};-\tf12;3\big)+Q_{\rho}(x),
\end{align}
where
\begin{equation*}
Q_{\rho}(x)=\df{1}{2\pi{i}}\int_{C_{\rho}}(|D|q)^{s}\df{\Gamma(s)\zeta(2s)L(2s,\chi_{D})L(2s,\chi)x^{s+\rho}}{\pi^{3s}\Gamma(s+\rho+1)}ds,
\end{equation*}
where $C_{\rho}$ is a positively oriented closed curve encircling the poles of the integrand.

The integrand of $Q_{\rho}(x)$ has only a simple pole at $s=\tf12$.  Thus,
%Thus, by \eqref{L1},
\begin{align}\label{Q3}
Q_{\rho}(x)&=\df{(|D|q\pi)^{1/2}L(1,\chi_{D})L(1,\chi)x^{1/2+\rho}}{2\pi^{3/2}\Gamma(\rho+\tfrac32)} \notag\\
&=-\df{(|D|q\pi)^{1/2}L(1,\chi_{D})G(\chi)x^{1/2+\rho}}{2q\pi^{3/2}\Gamma(\rho+\tfrac32)}\sum_{n=1}^{q-1}\overline{\chi}(n)\log{(2\sin(\pi n/q))}.
\end{align}
Now, we replace $x$ by $\tf{\pi^3x^2}{|D|q}$ in \eqref{dDchi} and then apply \eqref{mainmain} and \eqref{Q3} to derive
\begin{align*}
&\frac{1}{\Gamma(\rho+1)}
\sum_{n\leq x}\mathscr{D}_{D, \chi}(n)(x^2-n^2)^{\rho} \notag\\
&=\frac{G(\chi)G(\chi_{D})x^{2\rho}}{\pi^{3/2}}\sum_{n=1}^{\infty}\frac{\mathscr{D}_{\overline{D}, \overline{\chi}}(n)}{n}
G_{6,0}^{0,3}\Big(\frac{D^2q^2}{\pi^{6}(nx)^2}\Big)\notag\\
&\qquad -\frac{G(\chi)\sqrt{\pi}L(1, \chi_{D})x^{1+2\rho}}{2q\Gamma(\rho+\tf32)}
\sum_{n=1}^{q-1}\overline{\chi}(n)\log{(2\sin(\pi n/q))}.
\end{align*}
This completes the proof.
\end{proof}

\begin{theorem}\label{theorem5.2}
Let $\chi_{D}$ be a nonprincipal even primitive character modulo $|D|$, where $D$ is a
fundamental discriminant.
For $x>0$ and $\rho>0,$
\begin{align}\label{dDmain}
&\frac{1}{\Gamma(\rho+1)}
\sum_{n\leq x}\mathscr{D}_{D}(n)(x^2-n^2)^{\rho} \notag\\
=&\frac{G(\chi_{D})x^{2\rho}}{\pi^{3/2}}\sum_{n=1}^{\infty}\frac{\mathscr{D}_{\overline{D}}(n)}{n}
G_{6, 0}^{0, 3}\Big(\frac{D^2}{\pi^{6}(nx)^2}\Big)\notag\\
&+\frac{\sqrt{\pi}x^{1+2\rho}L(1, \chi_{D})}{4\Gamma(\rho+\tf32)}\left\{\frac{2L'(1, \chi_{D})}{L(1, \chi_{D})}
+\frac{\Gamma'(\tf12)}{\sqrt{\pi}}-\psi(\rho+\tf32)
+2\log x+4\gamma\right\},
\end{align}
where $\psi(s)=\dfrac{\Gamma'(s)}{\Gamma(s)}.$
\end{theorem}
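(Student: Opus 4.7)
The plan follows the template used in the proofs of Theorems \ref{rsub1} and \ref{theorem2}, now applied to the Dirichlet series $\zeta(2s)^{2}L(2s,\chi_{D})=\sum_{n=1}^{\infty}\mathscr{D}_{D}(n)/n^{s}$. The first step is to derive the functional equation of $\zeta(2s)^{2}L(2s,\chi_{D})$ by squaring \eqref{zetafe}, multiplying by \eqref{evencha}, and using that $\overline{\chi}_{D}=\chi_{D}$ since $\chi_{D}$ is real. This yields
\begin{align*}
\pi^{-3s}|D|^{s}\Gamma(s)^{3}\zeta(2s)^{2}L(2s,\chi_{D})
=\frac{G(\chi_{D})}{\sqrt{|D|}}\pi^{-3(\tf12-s)}|D|^{\tf12-s}\Gamma\big(\tf12-s\big)^{3}\zeta(1-2s)^{2}L(1-2s,\chi_{D}).
\end{align*}
I then apply Theorems 2 and 4 of \cite{identitiesI} with $m=3$, $r=\tf12$, $\lambda_{n}=\mu_{n}=\pi^{3}n^{2}/|D|$, and $a_{n}=b_{n}=\mathscr{D}_{D}(n)$ (the dual side carrying the expected factor $G(\chi_{D})/\sqrt{|D|}$), producing an identity of the same shape as \eqref{Eq3} and \eqref{dDchi}, namely a $K$-Bessel series plus a residue term $Q_{\rho}(x)$.

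The main obstacle is the evaluation of $Q_{\rho}(x)$. In contrast with the proofs of Theorems \ref{rsub1}, \ref{theorem2}, and \ref{theorem5.1}, the critical pole at $s=\tf12$ is now a \emph{double} pole, arising from $\zeta(2s)^{2}$; since $L(2s,\chi_{D})$ is entire (because $\chi_{D}$ is non-principal) and $\Gamma(s)$ is regular at $s=\tf12$, this is the only pole of the integrand inside $C_{\rho}$. From \eqref{zle} I extract $\zeta(2s)^{2}=\tfrac{1/4}{(s-\tf12)^{2}}+\tfrac{\gamma}{s-\tf12}+O(1)$; letting $h(s):=|D|^{s}\Gamma(s)L(2s,\chi_{D})x^{s+\rho}/(\pi^{3s}\Gamma(s+\rho+1))$ denote the remaining analytic factor, the residue equals $\tf14 h'(\tf12)+\gamma h(\tf12)$. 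Logarithmic differentiation produces
$$\frac{h'(\tf12)}{h(\tf12)}=\log\Big(\frac{|D|x}{\pi^{3}}\Big)+\frac{\Gamma'(\tf12)}{\sqrt{\pi}}+\frac{2L'(1,\chi_{D})}{L(1,\chi_{D})}-\psi\big(\rho+\tf32\big),$$
while $h(\tf12)=|D|^{1/2}L(1,\chi_{D})x^{1/2+\rho}/(\pi\,\Gamma(\rho+\tf32))$; combining these gives a closed form for $Q_{\rho}(x)$.

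To finish, I replace $x$ by $\pi^{3}x^{2}/|D|$ and multiply through by $(|D|/\pi^{3})^{\rho}$, exactly as in the passage from \eqref{Eq3} to \eqref{star}. The log term collapses as $\log(|D|x/\pi^{3})\to\log(x^{2})=2\log x$, the extra powers of $|D|$ and $\pi$ cancel cleanly, and the prefactor of $Q_{\rho}$ rearranges to $\sqrt{\pi}\,x^{1+2\rho}L(1,\chi_{D})/(4\Gamma(\rho+\tf32))$, producing the second term on the right-hand side of \eqref{dDmain}. Converting the $K$-Bessel sum via \eqref{mainmain} with $r_{1}=2$ then yields the first term, with Meijer-$G$ argument $|D|^{2}/(\pi^{6}(nx)^{2})=D^{2}/(\pi^{6}(nx)^{2})$ and prefactor $G(\chi_{D})x^{2\rho}/\pi^{3/2}$. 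The only genuine computation is the double-pole residue; the remainder is bookkeeping parallel to the preceding proofs.
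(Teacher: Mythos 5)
Your proposal is correct and follows essentially the same route as the paper: derive the functional equation of $\zeta^2(2s)L(2s,\chi_D)$ from \eqref{zetafe} and \eqref{evencha}, apply Theorems 2 and 4 of \cite{identitiesI} with $m=3$, $\lambda_n=\mu_n=\pi^3n^2/|D|$, evaluate $Q_\rho(x)$ from the double pole at $s=\tf12$, then substitute $x\mapsto \pi^3x^2/|D|$ and invoke \eqref{mainmain} with $r_1=2$. Your residue computation $\tf14 h'(\tf12)+\gamma h(\tf12)$ via logarithmic differentiation is just a repackaging of the paper's term-by-term Laurent expansions \eqref{Gle}--\eqref{1Gle}, \eqref{z2le}--\eqref{Lle}, and it reproduces \eqref{Q4} exactly.
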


\begin{proof}
Similarly, from \eqref{zetafe} and \eqref{evencha}, it is easy to see that $\zeta^2(2s)L(2s, \chi_{D})$ satisfies the functional equation
\begin{align*}
\pi^{-3s}|D|^s\Gamma^3(s)\zeta^2(2s)L(2s, \chi_{D})
=\pi^{-3(\tf12-s)}|D|^{(\tf12-s)}\Gamma^3(\tf12-s)\zeta^2(1-2s)L(1-2s, \overline{\chi}_{|D|}).
\end{align*}
Using Theorems 2 and 4 in \cite{identitiesI} with $$m=3, ~ r=\frac12,
~ \lambda_n=\mu_n=\dfrac{n^2\pi^3}{|D|},
~a_n=\mathscr{D}_{D}(n), \, \quad\text{and}\quad \, b_n=\dfrac{G(\chi_{D})}{\sqrt{|D|}}\mathscr{D}_{\overline{D}}(n),$$
we find that, for $x>0$ and $\rho>0,$
\begin{align}\label{dD}
\frac{1}{\Gamma(\rho+1)}
&\sum_{\lambda_n\leq x}\mathscr{D}_{D}(n)(x-\lambda_n)^{\rho} \notag\\
&=\frac{G(\chi_{D})}{2^{2\rho}\sqrt{|D|}}
\sum_{n=1}^{\infty}\mathscr{D}_{\overline{D}}(n)\Big(\frac{x}{\mu_n}\Big)^{\frac14+\frac{\rho}{2}}
K_{\frac12+\rho}\big(2^{3}\sqrt{\mu_{n}x};-\tf12;3\big)+Q_{\rho}(x),
\end{align}
where
\begin{align*}
Q_{\rho}(x)=\df{1}{2\pi{i}}\int_{C_{\rho}}\df{|D|^{s}\Gamma(s)\zeta^2(2s)L(2s,\chi_{D})x^{s+\rho}}{\pi^{3s}\Gamma(s+\rho+1)}ds.
\end{align*}
where $C_{\rho}$ is a closed curve containing all of the integrand's singularities in its interior. 
Note that $Q_{\rho}(x)$ has only a double pole at $s=\tf12.$ From \eqref{zle}, we have
\begin{align}\label{z2le}
\zeta^2(2s)=\frac{\tf14}{(s-\tf12)^2}+\frac{\gamma}{s-\tf12}+\cdots.
\end{align}
Also, we consider
\begin{align}
\Big(\frac{|D|x}{\pi^{3}}\Big)^{s}&=\Big(\frac{|D|x}{\pi^{3}}\Big)^{1/2}+
\Big(\frac{|D|x}{\pi^{3}}\Big)^{1/2}\log\Big(\frac{|D|x}{\pi^{3}}\Big)(s-\tf12)+\cdots \label{sle},\\
L(2s, \chi_D)&=L(1, \chi_D)+2L'(1, \chi_D)(s-\tf12)+\cdots. \label{Lle}
\end{align}
Using the Laurent expansions \eqref{Gle}, \eqref{1Gle}, \eqref{z2le}, \eqref{sle} and \eqref{Lle}, we can evaluate $Q_{\rho}(x)$ as follows:
\begin{align}\label{Q4}
Q_{\rho}(x)&=\frac{L(1, \chi_D)\sqrt{|D|}x^{\rho+\tf12}}{4\Gamma(\rho+\tf32)\pi}\Big(\frac{2L'(1, \chi_D)}{L(1, \chi_D)}+\frac{\Gamma'(\tf12)}{\sqrt{\pi}}
-\psi(\rho+\tf32)+\log\Big(\df{|D|x}{\pi^3}\Big)+4\gamma\Big).
\end{align}
We replace $x$ by $\tf{\pi^3x^2}{|D|}$ in \eqref{dD} and use \eqref{Q4} and \eqref{mainmain} to deduce that 
\begin{align*}
&\frac{1}{\Gamma(\rho+1)}
\sum_{n\leq x}\mathscr{D}_{D}(n)(x^2-n^2)^{\rho} \notag\\
=&\frac{G(\chi_{D})x^{2\rho}}{\pi^{3/2}}\sum_{n=1}^{\infty}\frac{\mathscr{D}_{\overline{D}}(n)}{n}
G_{6, 0}^{0, 3}\Big(\frac{D^2}{\pi^{6}(nx)^2}\Big)\notag\\
&+\frac{\sqrt{\pi}x^{1+2\rho}L(1, \chi_{D})}{4\Gamma(\rho+\tf32)}\left\{\frac{2L'(1, \chi_{D})}{L(1, \chi_{D})}
+\frac{\Gamma'(\tf12)}{\sqrt{\pi}}-\psi(\rho+\tf32)
+2\log x+4\gamma\right\}.
\end{align*}
This completes our proof.
\end{proof}

\begin{theorem}\label{2ndmain}
Let $\chi_{D}$ be a nonprincipal even primitive character modulo $|D|$, where $D$ is a
fundamental discriminant. For $x>0,$ $0<\theta<1$ and $\rho>0,$
\begin{align}\label{main2}
&\sum_{n\leq x}(x^2-n^2)^{\rho}\sum_{r\mid n}d_{\chi_{D}}\Big(\frac{n}{r}\Big)\cos(2\pi r\theta) \notag\\
&=\frac{\Gamma(\rho+1)G(\chi_D)x^{2\rho}}{2\pi^{3/2}}\sum_{m=1}^{\infty}\frac{d_{\overline{\chi}_{D}}(m)}{m}
\sum_{n=0}^{\infty}\left\{\frac{G_{6, 0}^{0, 3}\Big(\frac{D^2}{\pi^{6}(m(n+\theta)x)^2}\Big)}{n+\theta}
+\frac{G_{6, 0}^{0, 3}\Big(\frac{D^2}{\pi^{6}(m(n+1-\theta)x)^2}\Big)}{n+1-\theta}\right\}\notag\\
&\qquad \qquad -\frac{\Gamma(\rho+1)\sqrt{\pi}L(1, \chi_D)x^{1+2\rho}}{2\Gamma(\rho+\tf32)}\log{(2\sin(\pi\theta))}.
\end{align}
\end{theorem}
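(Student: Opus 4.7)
The plan is to mirror the proof of Theorem \ref{theorem3} step by step, with the substitutions $f_K \mapsto d_{\chi_D}$, $D_K \mapsto \mathscr{D}_D$, and $D_{K,\chi} \mapsto \mathscr{D}_{D,\chi}$, and with Theorems \ref{theorem5.1} and \ref{theorem5.2} playing the roles of Theorems \ref{rsub1} and \ref{theorem2}, respectively.

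First, by continuity in $\theta$ (once convergence of the right side of \eqref{main2} is established) it suffices to treat $\theta = h/q$ with $q$ an odd prime and $0<h<q$. Using the finite Fourier identity
$$\cos\Bigl(\frac{2\pi rh}{q}\Bigr) = \frac{1}{\phi(q)}\sum_{\substack{\chi \bmod q\\ \chi\text{ even}}}\chi(r)\chi(h)G(\overline{\chi}),$$
I would split $\sum_{r\mid n}d_{\chi_D}(n/r)\cos(2\pi rh/q)$ into a ``$q\mid r$'' part, which after the substitution $r=qm$ and $n = q\ell$ collapses to $q^{2\rho}\sum_{\ell\leq x/q}((x/q)^2-\ell^2)^\rho\mathscr{D}_D(\ell)$, and a ``$q\nmid r$'' part, which may be rewritten in terms of $\mathscr{D}_{D,\chi}(n)$ since $\chi(r)=0$ whenever $q\mid r$. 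Separating the contribution of $\chi_0$ from the $\chi$-sum using $G(\chi_0) = -1$, and invoking the identity
$$\sum_{n\leq x}(x^2-n^2)^\rho \mathscr{D}_{D,\chi_0}(n) = \sum_{n\leq x}(x^2-n^2)^\rho \mathscr{D}_D(n) - q^{2\rho}\sum_{n\leq x/q}\bigl((x/q)^2-n^2\bigr)^\rho \mathscr{D}_D(n)$$
(derived exactly as in \eqref{DKchi0}), expresses the left side of \eqref{main2} as
$$\frac{q^{2\rho+1}}{\phi(q)}A(x/q) - \frac{1}{\phi(q)}A(x) + \frac{1}{\phi(q)}\sum_{\substack{\chi\neq\chi_0\\ \chi\text{ even}}}\chi(h)G(\overline{\chi})B_\chi(x),$$
where $A(y):=\sum_{n\leq y}(y^2-n^2)^\rho\mathscr{D}_D(n)$ and $B_\chi(y):=\sum_{n\leq y}(y^2-n^2)^\rho\mathscr{D}_{D,\chi}(n)$.

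Next, I would substitute the closed forms from Theorems \ref{theorem5.1} and \ref{theorem5.2} and verify the cancellations. The residue constants $2L'(1,\chi_D)/L(1,\chi_D)$, $\Gamma'(\tfrac12)/\sqrt\pi$, $-\psi(\rho+\tfrac32)$, and $4\gamma$ appearing in the $Q_\rho$-piece of Theorem \ref{theorem5.2} are the same whether evaluated at $x$ or at $x/q$; the coefficients $q^{2\rho+1}/\phi(q)$ (at $x/q$, together with the factor $(x/q)^{1+2\rho}$) and $-1/\phi(q)$ (at $x$) have equal magnitude and opposite sign in front of $x^{1+2\rho}L(1,\chi_D)/(4\phi(q)\Gamma(\rho+\tfrac32))$, so those constants cancel exactly and only the $2\log(x/q) - 2\log x = -2\log q$ residue survives. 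On the character side, $G(\chi)G(\overline{\chi}) = q$ for even primitive $\chi \neq \chi_0$ mod the prime $q$, and after collapsing the character sum by the orthogonality relation \eqref{chieven}, the residue pieces from Theorem \ref{theorem5.1} combine to $-\frac{\Gamma(\rho+1)\sqrt\pi L(1,\chi_D)x^{1+2\rho}}{2\Gamma(\rho+\tfrac32)}\log(2\sin(\pi h/q))$ plus an extra $\log q$ term that precisely cancels the leftover above via $\sum_{n=1}^{q-1}\log(2\sin(\pi n/q)) = \log q$, a direct consequence of \eqref{sin}.

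For the main Meijer $G$ series, I would write $\mathscr{D}_{\overline D,\overline\chi}(n) = \sum_{k\ell = n}d_{\overline\chi_D}(k)\overline\chi(\ell)$ and interchange the order of summation with the character sum; the orthogonality \eqref{chieven} forces $\ell \equiv \pm h \pmod q$, so $\ell$ ranges over $\{(n+h/q)q,\, (n+1-h/q)q\}_{n\geq 0}$. After pulling the $q^2$ out of the $G^{0,3}_{6,0}$-argument and relabelling $k\to m$, the double sum collapses to the displayed form on the right of \eqref{main2}. The main obstacle is algebraic bookkeeping: the interacting factors of $q$, $\phi(q)$, $G(\chi)$, and $G(\chi_D)$ must be tracked carefully, and in particular the residue constants in Theorem \ref{theorem5.2}, whose shape differs from those in Theorem \ref{theorem2}, must be pushed through the double-evaluation cancellation cleanly so that nothing survives except the clean log-sine term. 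No new analytic input is required beyond Theorems \ref{theorem5.1}--\ref{theorem5.2} and the character-theoretic identities already used in the proof of Theorem \ref{theorem3}.
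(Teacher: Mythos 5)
Your proposal is correct and follows essentially the same route as the paper: reduction to $\theta=h/q$ with $q$ prime, the even-character expansion of the cosine with the $q\mid r$ and $q\nmid r$ split leading to the decomposition in terms of $\mathscr{D}_D$ and $\mathscr{D}_{D,\chi}$ (the paper's \eqref{dD2}), substitution of Theorems \ref{theorem5.1} and \ref{theorem5.2}, cancellation of the residue constants leaving only the $\log q$ piece absorbed via \eqref{sin}, and the final collapse of the character sum by \eqref{chieven} together with the identity $\mathscr{D}_{\overline{D},\overline{\chi_0}}(n)=\mathscr{D}_{\overline{D}}(n)-\mathscr{D}_{\overline{D}}(n/q)$. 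The bookkeeping you describe (including $G(\chi)G(\overline{\chi})=q$ for even primitive $\chi$ and the survival of only $-2\log q$ from the double evaluation at $x$ and $x/q$) matches the paper's computation.
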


\begin{proof}
Similarly to the proof of \eqref{mainmain33}, we prove \eqref{main2} for $\theta=h/q,$ where $q$ is prime and $0<h<q.$
With the same argument, we can derive the following identity, analogous to \eqref{fKcos2}:
\begin{align}\label{dD2}
&\sum_{n\leq x}(x^2-n^2)^{\rho}\sum_{r\mid n}d_{\chi_{D}}\Big(\frac{n}{r}\Big)\cos\Big(\frac{2\pi rh}{q}\Big) \notag\\
&=\frac{q^{2\rho+1}}{\phi(q)}\sum_{n\leq x/q}\left(\Big(\frac{x}{q}\Big)^2-n^2\right)^{\rho}\mathscr{D}_{D}(n)
-\frac{1}{\phi(q)}\sum_{n\leq x}(x^2-n^2)^{\rho}\mathscr{D}_{D}(n)\notag\\
&\qquad+\frac{1}{\phi(q)}\sum_{\substack{\chi\neq\chi_0 \\ \text{even}}}\chi(h)G(\overline{\chi})
\sum_{n\leq x}(x^2-n^2)^{\rho}\mathscr{D}_{D, \chi}(n).
\end{align}
We use the identities \eqref{dDchimain} and \eqref{dDmain} in \eqref{dD2} to deduce that 
\begin{align}\label{dD3}
&\sum_{n\leq x}(x^2-n^2)^{\rho}\sum_{r\mid n}d_{\chi_{D}}\Big(\frac{n}{r}\Big)\cos\Big(\frac{2\pi rh}{q}\Big)  \notag\\
=&\frac{\Gamma(\rho+1)qG(\chi_{D})x^{2\rho}}{\phi(q)\pi^{3/2}}\sum_{n=1}^{\infty}\frac{\mathscr{D}_{\overline{D}}(n)}{n}
G_{6, 0}^{0, 3}\Big(\frac{(Dq)^2}{\pi^{6}(nx)^2}\Big)\notag\\
&+\frac{\Gamma(\rho+1)\sqrt{\pi}x^{1+2\rho}L(1, \chi_{D})}{4\phi(q)\Gamma(\rho+\tf32)}\left\{\frac{2L'(1, \chi_{D})}{L(1, \chi_{D})}
+\frac{\Gamma'(\tf12)}{\sqrt{\pi}}-\psi(\rho+\tf32)
+2\log (\tf{x}{q})+4\gamma\right\} \notag\\
&-\frac{\Gamma(\rho+1)G(\chi_{D})x^{2\rho}}{\phi(q)\pi^{3/2}}\sum_{n=1}^{\infty}\frac{\mathscr{D}_{\overline{D}}(n)}{n}
G_{6, 0}^{0, 3}\Big(\frac{D^2}{\pi^{6}(nx)^2}\Big)\notag\\
&-\frac{\Gamma(\rho+1)\sqrt{\pi}x^{1+2\rho}L(1, \chi_{D})}{4\phi(q)\Gamma(\rho+\tf32)}\left\{\frac{2L'(1, \chi_{D})}{L(1, \chi_{D})}
+\frac{\Gamma'(\tf12)}{\sqrt{\pi}}-\psi(\rho+\tf32)
+2\log x+4\gamma\right\} \notag\\
&+\frac{\Gamma(\rho+1)}{\phi(q)}\sum_{\substack{\chi\neq\chi_0 \\ \text{even}}}\chi(h)G(\overline{\chi})\left\{
\frac{G(\chi)G(\chi_{D})x^{2\rho}}{\pi^{3/2}}\sum_{n=1}^{\infty}\frac{\mathscr{D}_{\overline{D}, \overline{\chi}}(n)}{n}
G_{6,0}^{0,3}\Big(\frac{D^2q^2}{\pi^{6}(nx)^2}\Big) \right.\notag\\
&\qquad \left.-\frac{G(\chi)\sqrt{\pi}L(1, \chi_{D})x^{1+2\rho}}{2q\Gamma(\rho+\tf32)}
\sum_{n=1}^{q-1}\overline{\chi}(n)\log{(2\sin(\pi n/q))} \right\}\notag\\
=&\frac{\Gamma(\rho+1)qG(\chi_{D})x^{2\rho}}{\phi(q)\pi^{3/2}}\sum_{n=1}^{\infty}\frac{\mathscr{D}_{\overline{D}}(n)}{n}
G_{6, 0}^{0, 3}\Big(\frac{(Dq)^2}{\pi^{6}(nx)^2}\Big) \notag\\
&-\frac{\Gamma(\rho+1)G(\chi_{D})x^{2\rho}}{\phi(q)\pi^{3/2}}\sum_{n=1}^{\infty}\frac{\mathscr{D}_{\overline{D}}(n)}{n}
G_{6, 0}^{0, 3}\Big(\frac{D^2}{\pi^{6}(nx)^2}\Big)
-\frac{\Gamma(\rho+1)\sqrt{\pi}x^{1+2\rho}L(1, \chi_{D})}{2\phi(q)\Gamma(\rho+\tf32)}\log q  \notag\\
&+\frac{\Gamma(\rho+1)}{\phi(q)}\sum_{\substack{\chi\neq\chi_0 \\ \text{even}}}\chi(h)\left\{
\frac{qG(\chi_{D})x^{2\rho}}{\pi^{3/2}}\sum_{n=1}^{\infty}\frac{\mathscr{D}_{\overline{D}, \overline{\chi}}(n)}{n}
G_{6,0}^{0,3}\Big(\frac{D^2q^2}{\pi^{6}(nx)^2}\Big) \right.\notag\\
&\qquad \left.-\frac{\sqrt{\pi}L(1, \chi_{D})x^{1+2\rho}}{2\Gamma(\rho+\tf32)}
\sum_{n=1}^{q-1}\overline{\chi}(n)\log{(2\sin(\pi n/q))} \right\}.
\end{align}

Using \eqref{sin} and the easy identity
$$\mathscr{D}_{\overline{D}, \overline{\chi_0}}(n)=\mathscr{D}_{\overline{D}}(n)-\mathscr{D}_{\overline{D}}\big(\tf{n}{q}\big),$$
we can rewrite \eqref{dD3} as
\begin{align*}
&\sum_{n\leq x}(x^2-n^2)^{\rho}\sum_{r\mid n}d_{\chi_{D}}\Big(\frac{n}{r}\Big)\cos\Big(\frac{2\pi rh}{q}\Big) \notag\\
=&\frac{\Gamma(\rho+1)}{\phi(q)}\sum_{\substack{\chi \\ \text{even}}}\chi(h)\left\{
\frac{qG(\chi_{D})x^{2\rho}}{\pi^{3/2}}\sum_{n=1}^{\infty}\frac{\mathscr{D}_{\overline{D}, \overline{\chi}}(n)}{n}
G_{6,0}^{0,3}\Big(\frac{D^2q^2}{\pi^{6}(nx)^2}\Big) \right.\notag\\
&\qquad \left.-\frac{\sqrt{\pi}L(1, \chi_{D})x^{1+2\rho}}{2\Gamma(\rho+\tf32)}
\sum_{n=1}^{q-1}\overline{\chi}(n)\log{(2\sin(\pi n/q))} \right\} \notag\\
=&\frac{\Gamma(\rho+1)qG(\chi_{D})x^{2\rho}}{\phi(q)\pi^{3/2}}
\sum_{n=1}^{\infty}\frac{1}{n}\sum_{k |n}d_{\overline{\chi}_D}(\tf{n}{k})\sum_{\substack{\chi \\ \text{even}}}\chi(h)\overline{\chi}(k)
G_{6,0}^{0,3}\Big(\frac{D^2q^2}{\pi^{6}(nx)^2}\Big) \notag\\
&-\frac{\Gamma(\rho+1)\sqrt{\pi}L(1, \chi_{D})x^{1+2\rho}}{2\phi(q)\Gamma(\rho+\tf32)}
\sum_{n=1}^{q-1}\log{(2\sin(\pi n/q))}\sum_{\substack{\chi \\ \text{even}}}\chi(h)\overline{\chi}(n) \notag\\
=&\frac{\Gamma(\rho+1)G(\chi_D)x^{2\rho}}{2\pi^{3/2}}\sum_{m=1}^{\infty}\frac{d_{\overline{\chi}_{D}}(m)}{m}
\sum_{n=0}^{\infty}\left\{\frac{G_{6, 0}^{0, 3}\Big(\frac{D^2}{\pi^{6}(m(n+h/q)x)^2}\Big)}{n+h/q}
+\frac{G_{6, 0}^{0, 3}\Big(\frac{D^2}{\pi^{6}(m(n+1-h/q)x)^2}\Big)}{n+1-h/q}\right\}\notag\\
&\qquad \qquad -\frac{\Gamma(\rho+1)\sqrt{\pi}L(1, \chi_D)x^{1+2\rho}}{2\Gamma(\rho+\tf32)}\log{(2\sin(\pi h/q))}.
\end{align*}
This completes the proof of \eqref{main2}.
\end{proof}

Multiplying both sides of \eqref{main2} by $w_D$ and using \eqref{R}, we derive the following corollary.
\begin{corollary} Let $x>0$, $0<\theta<1$, and $\rho>0$. 
If $D$ is a fundamental discriminant and $\chi_D$ is a nonprincipal even primitive character,
then
\begin{align*}
&\sum_{n\leq x}(x^2-n^2)^{\rho}\sum_{r\mid n}R_D\Big(\frac{n}{r}\Big)\cos(2\pi r\theta) \notag\\
&=\frac{\Gamma(\rho+1)G(\chi_D)x^{2\rho}}{2\pi^{3/2}}\sum_{m=1}^{\infty}\frac{R_D(m)}{m}
\sum_{n=0}^{\infty}\left\{\frac{G_{6, 0}^{0, 3}\Big(\frac{D^2}{\pi^{6}(m(n+\theta)x)^2}\Big)}{n+\theta}
+\frac{G_{6, 0}^{0, 3}\Big(\frac{D^2}{\pi^{6}(m(n+1-\theta)x)^2}\Big)}{n+1-\theta}\right\}\notag\\
&\qquad \qquad -\frac{\Gamma(\rho+1)w_D\sqrt{\pi}L(1, \chi_D)x^{1+2\rho}}{2\Gamma(\rho+\tf32)}\log{(2\sin(\pi\theta))}.
\end{align*}
\end{corollary}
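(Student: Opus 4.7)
The plan is to derive this corollary as an immediate consequence of Theorem \ref{2ndmain} by multiplying the identity \eqref{main2} through by $w_D$ and substituting via Dirichlet's theorem (Theorem \ref{dir}). First I would observe that by Theorem \ref{theorem4.2} the character $\chi_D$ associated to a fundamental discriminant is the real Kronecker symbol $\left(\tfrac{D}{\cdot}\right)$; in particular $\overline{\chi}_D=\chi_D$, so $d_{\overline{\chi}_D}(m)=d_{\chi_D}(m)$ for every positive integer $m$. Combining this with \eqref{R} yields the uniform identity
\[
w_D\,d_{\chi_D}(\ell)\;=\;w_D\,d_{\overline{\chi}_D}(\ell)\;=\;R_D(\ell),
\]
which is the key substitution that will convert \eqref{main2} into the claimed formula.

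Next I would multiply both sides of \eqref{main2} by $w_D$. On the left, the constant passes through the outer Riesz-type sum, inside the divisor sum, and past $\cos(2\pi r\theta)$, landing on $d_{\chi_D}(n/r)$ and producing $R_D(n/r)$, which reproduces the left-hand side of the corollary verbatim. On the right, $w_D$ merges with $d_{\overline{\chi}_D}(m)$ in the Meijer $G$-function double series to give $R_D(m)$ (the $G$-function factors, the $1/m$, and the $1/(n+\theta)$, $1/(n+1-\theta)$ weights are untouched), while in the boundary term it simply scales the coefficient of $\log(2\sin(\pi\theta))$ to yield the $w_D\sqrt{\pi}L(1,\chi_D)$ prefactor displayed in the corollary.

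The only real bookkeeping issue, and the step I would take most care with, is that Theorem \ref{dir} as stated requires $n$ to be coprime with the discriminant $D$, whereas the sums in \eqref{main2} involve arguments $n/r$ and $m$ with no such coprimality restriction. This is readily resolved by noting that the identity $R_D(n)=w_D\sum_{k\mid n}\chi_D(k)$ extends to all positive integers $n$---either by adopting it as the definition of $R_D$ off the coprime locus, or, equivalently, by identifying the Dirichlet series $\zeta(s)L(s,\chi_D)$ with $w_D^{-1}\sum_{n\geq 1}R_D(n)n^{-s}$ and matching Dirichlet coefficients. Once this extension is in place, the corollary drops out of \eqref{main2} with no further computation.
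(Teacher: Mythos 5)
Your proposal is correct and is essentially the paper's own proof: the corollary is obtained precisely by multiplying \eqref{main2} by $w_D$ and invoking \eqref{R} (together with the fact that $\chi_D$ is real, so $d_{\overline{\chi}_D}=d_{\chi_D}$). Your extra remark about extending $R_D(n)=w_D d_{\chi_D}(n)$ beyond the coprime case is a sensible clarification of a point the paper passes over silently, but it does not change the argument.
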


\section{Big-O results}

We employ a theorem of Chandrasekharan and Narasimhan \cite{annals2}. We first provide the general setting \cite[p. 95--98]{annals2}.

 \begin{definition} Let $a(n)$ and $b(n)$ be two sequences of complex numbers, where not all terms are equal to 0 in either sequence. Let $\lambda_n$ and $\mu_n$ be two sequences of positive numbers, strictly increasing to $\infty$. Let $\delta>0$. Throughout, $s=\sigma +it$, where $\sigma$ and $t$ are both real. For $N\geq1$, let
 \begin{equation}\label{Delta}
 \Delta(s):=\prod_{n=1}^{N}\Gamma(\alpha_n s+\beta_n),
 \end{equation}
 where, for $1\leq n\leq N$, $\beta_n$ is complex and $\alpha_n>0$.  Assume that
 \begin{equation}\label{A}
 A:=\sum_{n=1}^N\alpha_n\geq 1.
 \end{equation}
 Let
 \begin{equation*}
 \varphi(s):=\sum_{n=1}^{\infty}\df{a(n)}{\lambda_n^s}\qquad \text{and}\qquad \psi(s):=\sum_{n=1}^{\infty}\df{b(n)}{\mu_n^s}
 \end{equation*}
  converge absolutely in some half-plane, and suppose they satisfy the functional equation
 \begin{equation*}%\label{fe}
 \Delta(s)\varphi(s)=\Delta(\delta-s)\psi(\delta-s).
 \end{equation*}
 Furthermore, assume that
   there exists in the $s$-plane a domain $\mathfrak{D}$, which is the exterior of a compact set $S$, in which there exists an analytic function $\chi$ with the properties
 \begin{equation*}
 \lim_{|t|\to\infty}\chi(s)=0,
 \end{equation*}
 uniformly in every interval $-\infty<\sigma_1\leq \sigma\leq\sigma_2<\infty$, and
 \begin{align*}
 \chi(s)&=\Delta(s)\varphi(s), \qquad \sigma > \alpha,\\
 \chi(s)&=\Delta(\delta-s)\psi(\delta-s), \qquad \sigma<\beta,
 \end{align*}
 where $\alpha$ and $\beta$ are particular constants.
 \end{definition}

For $\rho\geq0$, let
\begin{equation*}
A_{\rho}(x):=\df{1}{\Gamma(\rho+1)}\sum_{\lambda_n\leq x}a(n)(x-\lambda_n)^{\rho},
\end{equation*}
where the prime $\prime$ indicates that if $x=\lambda_n$ and $\rho=0$, the last term is to be multiplied by $\tf12$. Furthermore, let
 \begin{equation*}\label{poles}
 Q_{\rho}(x):=\df{1}{2\pi i}\int_{\mathcal{C_{\rho}}}\df{\Gamma(s)\varphi(s)}{\Gamma(s+\rho+1)}x^{s+\rho}ds,
 \end{equation*}
 where $\mathcal{C_{\rho}}$ is a closed curve enclosing all of the singularities of the integrand to the right of $\sigma=-\rho-1-k$, where $k$ is chosen such that $k>|\delta/2-1/(4A)|$, and all of the singularities of $\varphi(s)$ lie in $\sigma>-k$.

\begin{theorem}\cite[p. 98--99, Theorem 3.2]{annals2}\label{Othm}
Suppose that the functional equation
 $$\Delta(s)\varphi(s)=\Delta(\delta-s)\psi(\delta-s)$$
  is satisfied. If $\rho\geq 2A\beta-A\delta-\frac12$, where $\beta$ is such that 
  $\sum_{n=1}^{\infty}|b_n|\mu_n^{-\beta}<\infty,$ then 
  \begin{equation}\label{A-Q}
  A_{\rho}(x)-Q_{\rho}(x)=O(x^{\theta}),
  \end{equation}
  where 
  \begin{equation}\label{theta}
  \theta=\frac{A\delta+\rho(2A-1)-1/2}{2A}.
  \end{equation}
\end{theorem}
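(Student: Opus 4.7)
The plan is to prove \eqref{A-Q} by a contour integration argument: represent $A_\rho(x)$ as a vertical-line integral, shift the line past all singularities of the integrand to pick up $Q_\rho(x)$ as a sum of residues, and then use the functional equation to convert the remaining integral into a rapidly convergent series whose terms are controlled by Stirling's formula.

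First, using the Mellin-Barnes identity
$$\frac{(x-t)_+^\rho}{\Gamma(\rho+1)} = \frac{1}{2\pi i}\int_{(c)}\frac{\Gamma(s)}{\Gamma(s+\rho+1)}\,x^{s+\rho}\,t^{-s}\,ds \qquad (c>0),$$
together with absolute convergence of $\varphi(s)=\sum a(n)\lambda_n^{-s}$ for sufficiently large $\sigma$, I would obtain, for $c$ larger than the abscissa of absolute convergence and larger than the real parts of all singularities of $\varphi$,
$$A_\rho(x) = \frac{1}{2\pi i}\int_{(c)}\frac{\Gamma(s)\varphi(s)}{\Gamma(s+\rho+1)}\,x^{s+\rho}\,ds.$$
Next I would move the line of integration from $\Re s = c$ to $\Re s = -k - \rho - \tfrac12$, where $k$ is chosen as in the definition of $Q_\rho$. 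The horizontal contributions vanish because $\chi(s)\to 0$ uniformly in vertical strips and Stirling's estimate controls $\Gamma(s)/\Gamma(s+\rho+1)$. The residues enclosed are exactly those interior to $\mathcal{C}_\rho$, so
$$A_\rho(x) - Q_\rho(x) = \frac{1}{2\pi i}\int_{(-k-\rho-1/2)}\frac{\Gamma(s)\varphi(s)}{\Gamma(s+\rho+1)}\,x^{s+\rho}\,ds.$$

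On this shifted line, the functional equation $\Delta(s)\varphi(s)=\Delta(\delta-s)\psi(\delta-s)$ lets me replace $\varphi(s)$ by $\Delta(\delta-s)\psi(\delta-s)/\Delta(s)$. Since $-k-\rho-\tfrac12$ lies well to the left of $\delta-\beta$, the series $\psi(\delta-s)=\sum b(n)\mu_n^{s-\delta}$ converges absolutely on the contour; an interchange of sum and integral, justified by the exponential decay of the Stirling ratio $\Delta(\delta-s)/[\Delta(s)\Gamma(s+\rho+1)/\Gamma(s)]$, gives
$$A_\rho(x)-Q_\rho(x) = x^\rho\sum_{n=1}^{\infty}\frac{b(n)}{\mu_n^{\delta}}\cdot\frac{1}{2\pi i}\int_{(-k-\rho-1/2)}\frac{\Gamma(s)\,\Delta(\delta-s)}{\Delta(s)\,\Gamma(s+\rho+1)}\,(\mu_n x)^s\,ds.$$
Each individual integral is a generalized hyper-Bessel transform of argument $(\mu_n x)^{1/(2A)}$; by shifting to its saddle line and applying Stirling's formula to the gamma quotient (whose exponent from \eqref{A} and \eqref{Delta} is $A\delta+\rho(2A-1)-\tfrac12$ divided by $2A$), one obtains the pointwise bound
$$\Bigl|\text{integral}\Bigr| \ll (\mu_n x)^{\theta-\rho}\,\mu_n^{\delta-\theta-\beta+\beta},$$
so the total sum is bounded by $x^{\theta}\sum_n |b(n)|\mu_n^{-\beta}$, which is finite by assumption.

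The main obstacle is the uniformity of the Bessel-type asymptotic in Step three: one must show that the oscillatory integral genuinely has size $(\mu_n x)^\theta$ with a constant independent of $n$ and of the location of the saddle, and must track the exact exponents through the product of $N$ gamma factors so as to recover the explicit value of $\theta$ in \eqref{theta}. This forces the hypothesis $\rho\geq 2A\beta-A\delta-\tfrac12$, which is precisely the threshold at which the Riesz smoothing of order $\rho$ renders $\sum |b(n)|\mu_n^{\theta-\delta}$ dominated by $\sum|b(n)|\mu_n^{-\beta}$; below this threshold the series in Step three ceases to converge absolutely and the method breaks down.
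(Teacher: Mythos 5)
You should first note that the paper does not prove this theorem at all: it is quoted verbatim from Chandrasekharan--Narasimhan \cite{annals2}, so the only meaningful comparison is with their original argument, whose broad Mellin--Barnes strategy your sketch does follow. Within that strategy, however, two steps fail as written. The justification of the contour shift and of the sum--integral interchange rests on your claim of ``exponential decay of the Stirling ratio'' $\Delta(\delta-s)/\Delta(s)$ on vertical lines; this is false. By Stirling the exponential factors of $\Delta(\delta-s)$ and $\Delta(s)$ cancel exactly, leaving only polynomial size $|t|^{A(\delta-2\sigma)}$, so on the line $\Re s=\sigma$ the integrand has modulus of order $|t|^{A\delta-2A\sigma-\rho-1}$. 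Absolute convergence there forces $\sigma>\delta/2-\rho/(2A)$, while absolute convergence of $\psi(\delta-s)$ forces $\sigma<\delta-\beta$; these are compatible only when $\rho>2A\beta-A\delta$, half a unit short of the hypothesis $\rho\ge 2A\beta-A\delta-\tfrac12$, and estimating by absolute values on such a line yields only $O\bigl(x^{\theta+1/(4A)}\bigr)$. The missing $\tfrac12$ in the range and $1/(4A)$ in the exponent come entirely from the oscillation of the kernel, i.e.\ from the uniform asymptotic expansion of the generalized Bessel function $H(y)=\frac{1}{2\pi i}\int\frac{\Gamma(s)\,\Delta(\delta-s)}{\Gamma(s+\rho+1)\,\Delta(s)}\,y^{s}\,ds$, together with a separate device (Chandrasekharan--Narasimhan first establish the Voronoi-type identity for large $\rho$ and reach the stated range, including the endpoint, by a finite-difference descent). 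What you label ``the main obstacle'' is therefore not a refinement to be checked later; it is the proof.

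There is also a concrete bookkeeping error. Your displayed pointwise bound $(\mu_n x)^{\theta-\rho}\mu_n^{\delta-\theta-\beta+\beta}=(\mu_n x)^{\theta-\rho}\mu_n^{\delta-\theta}$, inserted into your series $x^{\rho}\sum_n b(n)\mu_n^{-\delta}\,(\text{integral})$, gives $x^{\theta}\sum_n|b(n)|\mu_n^{-\rho}$, not $x^{\theta}\sum_n|b(n)|\mu_n^{-\beta}$; since $\rho\ge 2A\beta-A\delta-\tfrac12$ does not imply $\rho\ge\beta$ (take $A=1$, $\delta=1$, $\beta=1+\epsilon$, $\rho=3/5$, the classical divisor setting), that series may diverge. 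The bound you actually need is $|H(y)|\ll y^{\theta-\rho}$, which makes the $n$-th term $\ll |b(n)|\,x^{\theta}\mu_n^{\theta-\rho-\delta}$ with $\theta-\rho-\delta\le-\beta$ precisely under the stated hypothesis; proving that bound uniformly (and justifying term-by-term application when the defining integral is not absolutely convergent) is the substance of Chandrasekharan--Narasimhan's lemmas, and presumably why the authors cite their theorem rather than reprove it.
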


\begin{theorem}\label{theorem6.3} Let $q$ be prime and $0<h<q.$ For $\rho> r_1/2,$
\begin{align*}
&\sum_{n\leq x}(x^2-n^2)^{\rho}\sum_{r\mid n}f_K\Big(\frac{n}{r}\Big)\cos\Big(\frac{2\pi rh}{q}\Big) \notag\\
&=-\frac{\sqrt{\pi}\Gamma(\rho+1)\gamma_{-1}(K)}{2\Gamma(\rho+\tf32)}\log{(2\sin(\pi h/q))}x^{1+2\rho}+
O\Big(x^{2\rho+\tf{1}{2}-\tf{2\rho+1}{2(r_1+1)}}\Big).
\end{align*}
\end{theorem}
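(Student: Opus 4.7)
The plan is to start from the decomposition \eqref{fKcos2} already established in the proof of Theorem \ref{theorem3}, which expresses the left-hand side as a finite linear combination (with coefficients $q$, $h$-dependent character values and Gauss sums divided by $\phi(q)$) of Riesz sums of the two types
\[
\sum_{n\leq X}(X^2-n^2)^{\rho}D_K(n) \qquad\text{and}\qquad \sum_{n\leq x}(x^2-n^2)^{\rho}D_{K,\chi}(n),
\]
for $\chi$ ranging over the nonprincipal even characters mod $q$. The idea is to estimate each such sum by applying Theorem \ref{Othm} of Chandrasekharan--Narasimhan with the parameters already identified in the proofs of Theorems \ref{rsub1} and \ref{theorem2}, namely $N=r_1+1$, $\alpha_n=1$, $A=r_1+1$, $\delta=\tf12$, and $\lambda_n=\mu_n=\pi^{r_1+1}n^2/(q|\Delta_K|)$ (with $q=1$ in the $\zeta_K(2s)\zeta(2s)$ case). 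The required functional equations are \eqref{feq} and \eqref{feq2}.

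For the convergence condition in Theorem \ref{Othm}, I would observe that $f_K(n)=O(n^{\epsilon})$ (a standard estimate) implies $D_K(n)=O(n^{\epsilon})$ and $D_{K,\chi}(n)=O(n^{\epsilon})$, so that $\sum_n |b_n|\mu_n^{-\beta}<\infty$ for any $\beta>\tf12$. Taking $\beta=\tf12+\epsilon$, the hypothesis $\rho\geq 2A\beta-A\delta-\tf12$ becomes $\rho > r_1/2$, which is exactly the assumption of the theorem. Theorem \ref{Othm} then gives $A_\rho(y)-Q_\rho(y)=O(y^{\theta})$ with
\[
\theta=\frac{A\delta+\rho(2A-1)-\tf12}{2A}=\frac{\tf{r_1}{2}+\rho(2r_1+1)}{2(r_1+1)}.
\]
Rescaling back from $y=\pi^{r_1+1}x^2/(q|\Delta_K|)$ to $x$ introduces a factor of $x^{2\theta}$, and a direct algebraic simplification gives
\[
2\theta = 2\rho+\tf12 - \frac{2\rho+1}{2(r_1+1)},
\]
matching the target error exponent.

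The explicit forms of $Q_\rho$ for each piece are already computed in \eqref{Q} (for $D_{K,\chi}$) and \eqref{Q2} (for $D_K$). Substituting these into the decomposition \eqref{fKcos2} and collecting terms, the $\gamma_0(K)$, $\gamma_{-1}(K)\gamma$, $\Gamma'(\tf12)$, and $\psi(\rho+\tf32)$ contributions from the two $D_K$-pieces cancel against each other (they appear with opposite signs from the coefficients $q^{2\rho+1}/\phi(q)$ and $-1/\phi(q)$), while the $\log x$ terms combine to produce only the residual $\log q$ discrepancy. Using the orthogonality relation \eqref{chieven} together with the product formula \eqref{sin}, the character sum $\sum_{\chi \text{ even}}\chi(h)\sum_{n=1}^{q-1}\overline{\chi}(n)\log(2\sin(\pi n/q))$ collapses to $\tf{\phi(q)}{2}(\log(2\sin(\pi h/q))-\log q)+\log q$, and these pieces assemble into the single main term
\[
-\frac{\sqrt{\pi}\,\Gamma(\rho+1)\gamma_{-1}(K)}{2\Gamma(\rho+\tf32)}\log(2\sin(\pi h/q))\,x^{1+2\rho}.
\]
This is the same algebraic reduction carried out in full inside the proof of Theorem \ref{theorem3}; the only difference here is that the oscillatory double Meijer $G$ series is replaced by an $O$-estimate.

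The main obstacle is bookkeeping: verifying that the parameters in Theorem \ref{Othm} are correctly identified for both the $\zeta_K(2s)\zeta(2s)$ and the $\zeta_K(2s)L(2s,\chi)$ settings (in particular that $A$ and $\delta$ are the same in both cases, so a single $\theta$ controls all contributions), and then matching the finitely many $Q_\rho$-main terms to verify the expected cancellation. Once these are in hand, the error terms add since the sum over even $\chi \neq \chi_0$ has only $O(q)$ summands each contributing $O(x^{2\theta})$, and the conclusion follows.
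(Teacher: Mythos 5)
Your proposal follows essentially the same route as the paper's proof: start from the decomposition \eqref{fKcos2}, apply Theorem \ref{Othm} to both functional equations \eqref{feq} and \eqref{feq2} with $A=r_1+1$, $\delta=\tfrac12$, $\beta=\tfrac12+\epsilon$ (so that the hypothesis becomes $\rho>r_1/2$), rescale $x\mapsto \pi^{r_1+1}x^2/(q|\Delta_K|)$ to get the error exponent $2\theta=2\rho+\tfrac12-\tfrac{2\rho+1}{2(r_1+1)}$, and then assemble the main terms from \eqref{Q} and \eqref{Q2} using \eqref{sin} and \eqref{chieven}, exactly as in the paper. The one slip is your intermediate evaluation of the character sum: by \eqref{chieven} the sum $\sum_{\chi\,\mathrm{even}}\chi(h)\sum_{n=1}^{q-1}\overline{\chi}(n)\log(2\sin(\pi n/q))$ picks up both residues $n\equiv h$ and $n\equiv -h\pmod q$, and since $\sin(\pi(q-h)/q)=\sin(\pi h/q)$ it equals $\phi(q)\log(2\sin(\pi h/q))$ (the principal character contributes $\log q$ via \eqref{sin}, which is what cancels the residual $-\log q$ from the two $D_K$-pieces), not $\tfrac{\phi(q)}{2}\bigl(\log(2\sin(\pi h/q))-\log q\bigr)+\log q$; your final main term is nonetheless the correct one, so this is a bookkeeping error rather than a gap in the argument.
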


\begin{proof} For the convenience of readers, we recall \eqref{fKcos2}:
\begin{align}\label{fKcos2p}
&\sum_{n\leq x}(x^2-n^2)^{\rho}\sum_{r\mid n}f_K\Big(\frac{n}{r}\Big)\cos\Big(\frac{2\pi rh}{q}\Big) \notag\\
=&\frac{q^{2\rho+1}}{\phi(q)}\sum_{n\leq x/q}\left(\Big(\frac{x}{q}\Big)^2-n^2\right)^{\rho}D_K(n)
-\frac{1}{\phi(q)}\sum_{n\leq x}(x^2-n^2)^{\rho}D_K(n) \notag\\
&\qquad+\frac{1}{\phi(q)}\sum_{\substack{\chi\neq\chi_0 \\ \text{even}}}\chi(h)G(\overline{\chi})
\sum_{n\leq x}(x^2-n^2)^{\rho}D_{K, \chi}(n).
\end{align}
We apply Theorem \ref{Othm} to estimate the sums 
\begin{equation}\label{sums}
\sum_{n\leq x}(x^2-n^2)^{\rho}D_K(n) \quad 
\text{and} \quad \sum_{n\leq x}(x^2-n^2)^{\rho}D_{K, \chi}(n).
\end{equation}
We begin with the first sum in \eqref{sums}. 
From \eqref{feq2}, \eqref{Delta} and \eqref{A}, we have $A=r_1+1$ and $\delta=\frac12.$ So, by \eqref{theta},
\begin{equation}\label{theta2}
\theta=\rho+\frac14-\frac{2\rho+1}{4(r_1+1)}.
\end{equation}
Also, note that $\beta=\tf12+\epsilon$, where $\epsilon >0$.  Thus, by \eqref{A-Q}, we see that, for $\rho> r_1/2,$
\begin{align*}
\frac{1}{\Gamma(\rho+1)}\sum_{\lambda_n\leq x}(x-\lambda_n)^{\rho}D_K(n)=Q_{\rho}(x)+O(x^{\theta}),
\end{align*}
where $Q_{\rho}(x)$ is given in \eqref{Q2}. Replacing $x$ by $\tf{\pi^{r_1+1}x^2}{|\Delta_K|},$ as in \eqref{star}, we have
\begin{align}\label{sum1}
\frac{1}{\Gamma(\rho+1)}\sum_{n\leq x}(x^2-n^2)^{\rho}D_K(n)&=\frac{\sqrt{\pi}x^{1+2\rho}}{2\Gamma(\rho+\tf32)}\left\{
\gamma_0(K)+\gamma_{-1}(K)\gamma+\frac{\gamma_{-1}(K)\Gamma'(\tf12)}{2\sqrt{\pi}}\right.\notag \\
&\qquad\left.-\frac12\gamma_{-1}(K)\psi(\rho+\tf32)+\gamma_{-1}(K)\log x\right\}+O(x^{2\theta}).
\end{align}

We estimate the second sum in \eqref{sums} in a similar fashion.  From \eqref{feq},  \eqref{Delta},  \eqref{A} and \eqref{theta},
we see that the values of  $A, \delta$ and $\theta$ are the same as above. Applying \eqref{A-Q} yields for $\rho> r_1/2,$
\begin{equation*}
\frac{1}{\Gamma(\rho+1)}\sum_{\lambda_n\leq x}(x-\lambda_n)^{\rho}D_{K, \chi}(n)=Q_{\rho}(x)+O(x^{\theta}),
\end{equation*}
where $Q_{\rho}(x)$ is evaluated in \eqref{Q}.
We replace $x$ by $\tf{\pi^{r_1+1}x^2}{q|\Delta_K|}$ to obtain 
\begin{align}\label{sum2}
&\frac{1}{\Gamma(\rho+1)}
\sum_{n\leq x}(x^2-n^2)^{\rho}D_{K, \chi}(n) \notag\\
&\qquad =-\frac{G(\chi)\sqrt{\pi}\gamma_{-1}(K)x^{1+2\rho}}{2q\Gamma(\rho+\tf32)}
\sum_{n=1}^{q-1}\overline{\chi}(n)\log{(2\sin(\pi n/q))} +O(x^{2\theta}).
\end{align}
Now, substituting \eqref{sum1} and \eqref{sum2} into \eqref{fKcos2p}, we deduce that 
\begin{align}
&\sum_{n\leq x}(x^2-n^2)^{\rho}\sum_{r\mid n}f_K\Big(\frac{n}{r}\Big)\cos\Big(\frac{2\pi rh}{q}\Big)\notag\\
&=-\frac{\sqrt{\pi}\Gamma(\rho+1)\gamma_{-1}(K)\log q}{2\phi(q)\Gamma(\rho+\tf32)}x^{1+2\rho}+O(x^{2\theta})\notag\\
&\quad -\frac{\sqrt{\pi}\Gamma(\rho+1)\gamma_{-1}(K)}{2\phi(q)\Gamma(\rho+\tf32)}x^{1+2\rho}
\sum_{\substack{\chi\neq\chi_0 \\ \text{even}}}\chi(h)
\sum_{n=1}^{q-1}\overline{\chi}(n)\log{(2\sin(\pi n/q))}\notag\\
&=-\frac{\sqrt{\pi}\Gamma(\rho+1)\gamma_{-1}(K)}{2\phi(q)\Gamma(\rho+\tf32)}x^{1+2\rho}
\sum_{\substack{\chi\\ \text{even}}}\chi(h)
\sum_{n=1}^{q-1}\overline{\chi}(n)\log{(2\sin(\pi n/q))}+O(x^{2\theta})\notag\\
&=-\frac{\sqrt{\pi}\Gamma(\rho+1)\gamma_{-1}(K)}{2\phi(q)\Gamma(\rho+\tf32)}x^{1+2\rho}
\sum_{n=1}^{q-1}\log{(2\sin(\pi n/q))}\sum_{\substack{\chi\\ \text{even}}}\chi(h)\overline{\chi}(n)+O(x^{2\theta})\notag\\
&=-\frac{\sqrt{\pi}\Gamma(\rho+1)\gamma_{-1}(K)}{2\Gamma(\rho+\tf32)}x^{1+2\rho}\log{(2\sin(\pi h/q))}+O(x^{2\theta}),\label{starstar}
\end{align}
where we employed \eqref{sin} and \eqref{chieven}.  Using \eqref{theta2} in \eqref{starstar}, we
 complete our proof. 
\end{proof}

\begin{theorem}\label{theorem6.4}
Let $q$ be prime and $0<h<q.$ For $\rho> 1,$
\begin{align*}
&\sum_{n\leq x}(x^2-n^2)^{\rho}\sum_{r\mid n}d_{\chi_{D}}\Big(\frac{n}{r}\Big)\cos\Big(\frac{2\pi rh}{q}\Big) \\
&=-\frac{\sqrt{\pi}\Gamma(\rho+1)L(1, \chi_{D})}{2\Gamma(\rho+\tf32)}\log{(2\sin(\pi h/q))}x^{1+2\rho}+O(x^{(5\rho+1)/3}).
\end{align*}
\end{theorem}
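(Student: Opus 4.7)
The plan is to mirror the proof of Theorem \ref{theorem6.3} step for step, but now using the Chandrasekharan--Narasimhan theorem (Theorem \ref{Othm}) applied to the Dirichlet series $\zeta^2(2s)L(2s,\chi_D)$ and $\zeta(2s)L(2s,\chi_D)L(2s,\chi)$ instead of $\zeta_K(2s)\zeta(2s)$ and $\zeta_K(2s)L(2s,\chi)$.

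First, recall from the proof of Theorem \ref{2ndmain} the decomposition \eqref{dD2}
\begin{align*}
&\sum_{n\leq x}(x^2-n^2)^{\rho}\sum_{r\mid n}d_{\chi_{D}}\Big(\frac{n}{r}\Big)\cos\Big(\frac{2\pi rh}{q}\Big) \\
&\qquad =\frac{q^{2\rho+1}}{\phi(q)}\sum_{n\leq x/q}\left(\Big(\frac{x}{q}\Big)^2-n^2\right)^{\rho}\mathscr{D}_{D}(n)
-\frac{1}{\phi(q)}\sum_{n\leq x}(x^2-n^2)^{\rho}\mathscr{D}_{D}(n)\\
&\qquad\qquad +\frac{1}{\phi(q)}\sum_{\substack{\chi\neq\chi_0 \\ \text{even}}}\chi(h)G(\overline{\chi})
\sum_{n\leq x}(x^2-n^2)^{\rho}\mathscr{D}_{D, \chi}(n).
\end{align*}
Thus it suffices to obtain asymptotic formulas for the two types of sums $\sum_{n\leq x}(x^2-n^2)^{\rho}\mathscr{D}_{D}(n)$ and $\sum_{n\leq x}(x^2-n^2)^{\rho}\mathscr{D}_{D, \chi}(n)$.

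Next, for both families the functional equations displayed in the proofs of Theorems \ref{theorem5.1} and \ref{theorem5.2} contain three $\Gamma$-factors of argument $s$, so in the notation of Theorem \ref{Othm} we have $N=3$, $\alpha_n=1$, $A=3$, and $\delta=\tfrac12$. The coefficients $\mathscr{D}_{\overline{D}}(n)$ and $\mathscr{D}_{\overline{D},\overline{\chi}}(n)$ are generated by Dirichlet series absolutely convergent for $\sigma>1$, so after the substitution $\mu_n=cn^2$ (with $c$ depending on $D,q$) we may take any $\beta>\tfrac12$ in the condition $\sum |b_n|\mu_n^{-\beta}<\infty$. The hypothesis $\rho\geq 2A\beta-A\delta-\tfrac12=6\beta-2$ in Theorem \ref{Othm} is therefore met as soon as $\rho>1$, which is precisely the range assumed. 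Formula \eqref{theta} then gives the exponent
\[
\theta=\frac{A\delta+\rho(2A-1)-\tfrac12}{2A}=\frac{5\rho+1}{6},
\]
so for $\rho>1$,
\begin{align*}
\frac{1}{\Gamma(\rho+1)}\sum_{\lambda_n\leq x}\mathscr{D}_D(n)(x-\lambda_n)^{\rho}&=Q_{\rho}(x)+O(x^{(5\rho+1)/6}),\\
\frac{1}{\Gamma(\rho+1)}\sum_{\lambda_n\leq x}\mathscr{D}_{D,\chi}(n)(x-\lambda_n)^{\rho}&=Q_{\rho}(x)+O(x^{(5\rho+1)/6}),
\end{align*}
where $Q_{\rho}(x)$ is read off from \eqref{Q4} and \eqref{Q3} respectively. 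Performing the substitution $x\mapsto \pi^3 x^2/|D|$ (resp.\ $x\mapsto \pi^3 x^2/(|D|q)$) converts these estimates into the shape needed in \eqref{dD2}, with error term $O(x^{(5\rho+1)/3})$.

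Finally, substituting these estimates back into \eqref{dD2}, the main terms combine exactly as in the last chain of computations in the proof of Theorem \ref{2ndmain}: the contributions of $Q_{\rho}$ for $\mathscr{D}_D$ cancel the logarithmic and constant parts up to the $\log q$ discrepancy, and orthogonality of even characters in \eqref{chieven} together with the product formula \eqref{sin} collapse the character sum to the single term $-\tfrac{\sqrt{\pi}\Gamma(\rho+1)L(1,\chi_D)}{2\Gamma(\rho+\tfrac32)}\log(2\sin(\pi h/q))x^{1+2\rho}$. I expect the main technical obstacle to be the bookkeeping of constants after the two different quadratic substitutions (and checking the convergence condition is satisfied uniformly over the finitely many characters $\chi\pmod q$), but since these are the same manipulations carried out in Theorems \ref{2ndmain} and \ref{theorem6.3}, no new ingredient is required.
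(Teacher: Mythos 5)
Your proposal is correct and follows essentially the same route as the paper: the decomposition \eqref{dD2}, an application of Theorem \ref{Othm} with $A=3$, $\delta=\tfrac12$, $\beta=\tfrac12+\epsilon$ (hence $\rho>1$ and $\theta=(5\rho+1)/6$, giving $O(x^{(5\rho+1)/3})$ after the quadratic substitution), the main terms read off from \eqref{Q4} and \eqref{Q3}, and the final collapse of the character sums via \eqref{sin} and \eqref{chieven}. No gaps; the only remaining work is the bookkeeping you already identified, which the paper carries out exactly as you describe.
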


\begin{proof}
For convenience, we record \eqref{dD2}: 
\begin{align}\label{dD2'}
&\sum_{n\leq x}(x^2-n^2)^{\rho}\sum_{r\mid n}d_{\chi_{D}}\Big(\frac{n}{r}\Big)\cos\Big(\frac{2\pi rh}{q}\Big) \notag\\
&=\frac{q^{2\rho+1}}{\phi(q)}\sum_{n\leq x/q}\left(\Big(\frac{x}{q}\Big)^2-n^2\right)^{\rho}\mathscr{D}_{D}(n)
-\frac{1}{\phi(q)}\sum_{n\leq x}(x^2-n^2)^{\rho}\mathscr{D}_{D}(n)\notag\\
&\qquad+\frac{1}{\phi(q)}\sum_{\substack{\chi\neq\chi_0 \\ \text{even}}}\chi(h)G(\overline{\chi})
\sum_{n\leq x}(x^2-n^2)^{\rho}\mathscr{D}_{D, \chi}(n).
\end{align}

Next, apply Theorem \ref{Othm}.  Observe that $A=3, \delta =\frac12$, $\beta=\frac12+\epsilon$, for $\epsilon >0$, and $\rho>1$. Referring to Theorem \ref{theorem5.2} and its proof, we replace $x$ by $\frac{\pi^3x^2}{|D|q}$.  Thus, we can derive, for $\rho> 1,$
\begin{align}\label{sum3}
&\frac{1}{\Gamma(\rho+1)}
\sum_{n\leq x}\mathscr{D}_{D}(n)(x^2-n^2)^{\rho} \notag\\
&=\frac{\sqrt{\pi}L(1, \chi_{D})}{4\Gamma(\rho+\tf32)}x^{1+2\rho}\left\{\frac{2L'(1, \chi_{D})}{L(1, \chi_{D})}
+\frac{\Gamma'(\tf12)}{\sqrt{\pi}}-\psi(\rho+\tf32)
+2\log x+4\gamma\right\}+O(x^{(5\rho+1)/3}),
\end{align}
where we recall that $\psi(s)=\dfrac{\Gamma'(s)}{\Gamma(s)}.$ 

Next, refer to Theorem \ref{theorem5.1} and its proof, in particular, \eqref{Q3}, in which we replaced $x$ by $\frac{\pi^{r_1+1}x^2}{q|\Delta_K|}$.  Thus, for $\rho> 1,$
\begin{align}
&\frac{1}{\Gamma(\rho+1)}
\sum_{n\leq x}\mathscr{D}_{D, \chi}(n)(x^2-n^2)^{\rho} \notag\\
&=-\frac{G(\chi)\sqrt{\pi}L(1, \chi_{D})}{2q\Gamma(\rho+\tf32)}x^{1+2\rho}
\sum_{n=1}^{q-1}\overline{\chi}(n)\log{(2\sin(\pi n/q))}+O(x^{(5\rho+1)/3}).\label{sum4} 
\end{align}
Now, we substitute \eqref{sum3} and \eqref{sum4} into \eqref{dD2'} to find that 
\begin{align*}
&\frac{1}{\Gamma(\rho+1)}\sum_{n\leq x}(x^2-n^2)^{\rho}\sum_{r\mid n}d_{\chi_{D}}\Big(\frac{n}{r}\Big)\cos\Big(\frac{2\pi rh}{q}\Big)\notag\\
=&-\frac{\sqrt{\pi}L(1, \chi_{D})\log q}{2\phi(q)\Gamma(\rho+\tf32)}x^{1+2\rho}+O(x^{(5\rho+1)/3})\notag\\
&-\frac{\sqrt{\pi}L(1, \chi_{D})x^{1+2\rho}}{2\phi(q)\Gamma(\rho+\tf32)}
\sum_{n=1}^{q-1}\log{(2\sin(\pi n/q))}\sum_{\substack{\chi\neq\chi_0 \\ \text{even}}}\chi(h)\overline{\chi}(n)\notag\\
=&-\frac{\sqrt{\pi}L(1, \chi_{D})x^{1+2\rho}}{2\phi(q)\Gamma(\rho+\tf32)}
\sum_{n=1}^{q-1}\log{(2\sin(\pi n/q))}\sum_{\substack{\chi \\ \text{even}}}\chi(h)\overline{\chi}(n)+O(x^{(5\rho+1)/3})\notag\\
=&-\frac{\sqrt{\pi}L(1, \chi_{D})x^{1+2\rho}}{2\Gamma(\rho+\tf32)}\log{(2\sin(\pi h/q))}+O(x^{(5\rho+1)/3}),  
\end{align*}
where we employed  \eqref{sin} and \eqref{chieven}. 
This completes the proof of the theorem. 
\end{proof}

%%%%%%%%%%%%%%%%%%%%%%%%%%%%%%%%%%%%%%%%%%%%%%%

\section{The Meijer $G$-Function}\label{meijer}
\begin{comment}
In our studies of divisor sums in algebraic number fields and sums of multiple $\sin$'s, we encountered higher dimensional integrals.  We show in this and the following section, that we can evaluate these integrals in terms of Meijer $G$-functions.

\begin{definition} Let $p$ and $q$ denote nonnegative integers. Let $a_1,a_2,\dots,a_p; b_1,b_2,\dots, b_q$ be arbitrary complex numbers.  The Meijer $G$-function is defined by \cite[p.~374]{erd1}
\begin{equation}\label{GG}
G_{p,q}^{m,n}\left(x\Bigg| \begin{matrix}a_1,a_2,\dots,a_p\\b_1,b_2,\dots, b_q\end{matrix}\right)
:=\df{1}{2\pi i}\int_L\df{\underset{j=1}{{\overset{m}{\prod}}}\Gamma(b_j-s)\underset{j=1}{{\overset{n}{\prod}}}\Gamma(1-a_j+s)}
{\underset{j=m+1}{{\overset{q}{\prod}}}\Gamma(1-b_j+s)\underset{j=n+1}{{\overset{p}{\prod}}}\Gamma(a_j-s)}x^s \,ds,
\end{equation}
where $L$ is a path from $-i\infty$ to $+i\infty$ separating the poles of $\Gamma(b_1-s)\cdots\Gamma(b_m-s)$ from those of
 $\Gamma(1-a_1+s)\cdots\Gamma(1-a_n+s)$.  %(The definition in \cite{erd1} contains a mistake; $m,n$ should be replaced by $n,m$.)
\end{definition}
\end{comment}

Recall the definition of the Meijer $G$-function in Definition \ref{definition1}.  Also, recall that to complete the proof of Theorem \ref{rsub1}, we need to prove the following theorem.
\begin{comment}
 From Theorem \ref{rsub1}, we must evaluate the special case $\ell=-\tf12$ of
\begin{align}\label{mainmain}
&K_{\rho+1/2}(x;\ell;r_1+1):=\int_0^{\infty}u_{r_1}^{\rho}J_{\ell}(u_{r_1})du_{r_1}
\int_0^{\infty}u_{r_1-1}^{\rho}J_{\ell}(u_{r_1-1})du_{r_1-1}\notag \\
&\cdots\cdots\int_0^{\infty}u_{1}^{\rho}J_{\ell}(u_1)J_{\rho+1/2}(x/u_1u_2\cdots u_{r_1})du_{1}.
\end{align}
\end{comment}

\begin{theorem}\label{rsub2} If $K_{\rho+1/2}(x;-\tf12;r_1+1)$ is defined by \eqref{mainmain}, then
\begin{gather*}
 K_{\rho+1/2}(x;-\tf12;r_1+1)\label{k1}\\=2^{-\rho+(r_1+1)/2}x^{\rho-1/2} G_{2(r_1+1),0}^{0,r_1+1}\left(\df{4^{r_1+1}}{ x^2}\Bigg|
 \begin{matrix}\tf12,\tf12,\dots, \tf12,\rho+1,0,0,\dots, 0\\-\end{matrix}\right),\notag
  \end{gather*}
  where there are $r_1\,\{0\}$'s and $(r_1+1)\, \{\tf12\}$'s.
  \end{theorem}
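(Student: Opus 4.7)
The plan is to prove the identity by induction on $r_1$, exploiting the recursion
\begin{equation*}
K_{\rho+1/2}(x;-\tfrac12;r_1+1) = \int_0^\infty u^\rho J_{-1/2}(u)\,K_{\rho+1/2}(x/u;-\tfrac12;r_1)\,du,
\end{equation*}
which follows from \eqref{K} by peeling off the outermost integration and absorbing the shift $x \mapsto x/u$ into the argument of $J_{\rho+1/2}$. For the base case $r_1 = 0$ the nested integral is empty, so $K_{\rho+1/2}(x;-\tfrac12;1) = J_{\rho+1/2}(x)$. Starting from the classical Mellin--Barnes representation
\begin{equation*}
J_\nu(x) = \frac{1}{2\pi i}\int_L \frac{\Gamma(-s)}{\Gamma(\nu+s+1)}(x/2)^{\nu+2s}\,ds,
\end{equation*}
specialized to $\nu = \rho+\tf12$, I would perform the change of variable $s \mapsto -s - \tf12$ and factor out $(x/2)^{\rho-1/2} = 2^{1/2-\rho}x^{\rho-1/2}$ to recover exactly $G_{2,0}^{0,1}(4/x^2 \mid \tf12,\rho+1)$, matching the asserted formula for $r_1 = 0$.

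For the inductive step I would insert the induction hypothesis into the recursion. The combination $u^\rho J_{-1/2}(u)\cdot(x/u)^{\rho-1/2}$ collapses to $x^{\rho-1/2}\,u^{1/2}J_{-1/2}(u) = x^{\rho-1/2}\sqrt{2/\pi}\cos u$, so the problem reduces to evaluating
\begin{equation*}
\int_0^\infty \cos u\;G_{2r_1,0}^{0,r_1}\!\left(\frac{4^{r_1}u^2}{x^2}\right) du.
\end{equation*}
Replacing $G$ by its defining Mellin--Barnes integral and interchanging the order of integration yields an inner integral of the form $\int_0^\infty u^{2s}\cos u\,du$, which I would evaluate using $\int_0^\infty u^{a-1}\cos u\,du = \Gamma(a)\cos(\pi a/2)$ together with the Legendre duplication formula and the reflection $\Gamma(z)\Gamma(1-z) = \pi/\sin\pi z$ to obtain the clean form $\sqrt\pi\,2^{2s}\Gamma(s+\tf12)/\Gamma(-s)$. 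This extra factor merges with the existing Gamma ratio $\Gamma(\tf12+s)^{r_1}/[\Gamma(-s)^{r_1-1}\Gamma(\rho+1-s)]$ to produce $\Gamma(\tf12+s)^{r_1+1}/[\Gamma(-s)^{r_1}\Gamma(\rho+1-s)]$, while the factor $2^{2s}$ promotes $4^{r_1}/x^2$ to $4^{r_1+1}/x^2$; the $\sqrt\pi$ cancels against the $\sqrt{2/\pi}$ to leave $\sqrt{2}$, which converts $2^{-\rho+r_1/2}$ into $2^{-\rho+(r_1+1)/2}$, completing the induction.

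The main obstacle I foresee is justifying the interchange of the Mellin--Barnes contour with the conditionally convergent cosine integral, since the formula $\int_0^\infty u^{a-1}\cos u\,du = \Gamma(a)\cos(\pi a/2)$ holds literally only in the strip $0 < \Re a < 1$. I would handle this by first choosing the contour $L$ so that $\Re s$ lies in $(-\tf12,0)$, then arguing by analytic continuation in $\rho$ (or by translating the contour past the relevant Gamma-function poles with suitable residue bookkeeping, as is standard in the theory of Meijer $G$-functions). An alternative justification is available via Slater's theorem on products of $G$-functions, which gives the same result directly from the Mellin-transform identity without the intermediate pointwise interchange; if either route becomes technical, I would fall back on deriving the identity in the sense of equal Mellin transforms on a common strip and invoking Mellin inversion to conclude equality of the analytic functions of $x$.
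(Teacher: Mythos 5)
Your argument is correct and is essentially the route the paper takes: both proofs peel off one $J_{-1/2}$ factor at a time and observe that each such integration is a cosine transform upgrading $G_{2n,0}^{0,n}$ to $G_{2(n+1),0}^{0,n+1}$, adding a parameter pair $(\tfrac12,0)$, a factor $\sqrt{2}$, and a factor $4$ in the argument. The only difference is presentational: you organize this as an induction and derive the two key facts by hand from the Mellin--Barnes definition (the base case from the Mellin--Barnes representation of $J_{\rho+1/2}$, and the cosine-transform step from $\int_0^\infty u^{a-1}\cos u\,du=\Gamma(a)\cos(\pi a/2)$ together with duplication and reflection), whereas the paper quotes the equivalent tabulated formulas for $x^{\mu}J_{\nu}(x)$ as a $G$-function and for the cosine transform of a general Meijer $G$-function from Erd\'elyi's tables, with the same borderline convergence issues handled at the same level of rigor.
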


\begin{proof}
  For the remainder of the proof, we set
\begin{equation}\label{axu}
a_j=\df{x}{u_{j+1}u_{j+2}\cdots u_{r_1}}, \qquad 1\leq j\leq r_1-1,\qquad a_{r_1}=x.
\end{equation}
 Employing \eqref{axu}, and replacing $u_1$ by $u$, we see that the inner integral is
\begin{equation}\label{1b}
f_1(\rho;a_1):=\sqrt{\df{2}{\pi}}\int_0^{\infty}u^{\rho-1/2}\cos u\,J_{\rho+1/2}(a_1/u)du.
\end{equation}
We use a representation for the ordinary Bessel function $J_{\nu}(x)$ in terms of a $G$-function \cite[p.~380]{erd1}, namely,
\begin{equation}\label{0}
x^{\mu}J_{\nu}(x)=2^{\mu}G^{1,0}_{0,2}\left(\df{x^2}{4}\Bigg|\begin{matrix}-\\\tf12(\mu+\nu),\tf12(\mu-\nu)\end{matrix}\right).
\end{equation}
Applying \eqref{0} with $\mu=-\rho+\tf12$, $\nu=\rho+\tf12$, and $x=a_1/u$, we find that
\begin{equation*}
\left(\df{a_1}{u}\right)^{-\rho+1/2}J_{\rho+1/2}\left(\df{a_1}{u}\right)=2^{-\rho+1/2}
G^{1,0}_{0,2}\left(\df{a_1^2}{4u^2}\Bigg|
 \begin{matrix}-\\\tf12,-\rho \end{matrix}\right),
\end{equation*}
or
\begin{align}
u^{\rho-1/2}J_{\rho+1/2}\left(\df{a_1}{u}\right)&=\left(\df{a_1}{2}\right)^{\rho-1/2}
G^{1,0}_{0,2}\left(\df{a_1^2}{4u^2}\Bigg|
 \begin{matrix}-\\ \tf12,-\rho \end{matrix}\right)\label{2b}\\
 &=\left(\df{a_1}{2}\right)^{\rho-1/2}
G^{0,1}_{2,0}\left(\df{4u^2}{a_1^2}\Bigg|
 \begin{matrix}\tf12,\rho+1\\- \end{matrix}\right).\label{2a}
\end{align}
 Note that in \eqref{2b}, we employed the definition of $G$ in \eqref{G} to switch the roles of the parameters $\tf12$ and $-\rho$ in \eqref{2b} and to obtain an alternative set of  parameters in \eqref{2a}.
Hence, by \eqref{1b} and \eqref{2a},
\begin{equation}\label{3bb}
f_1(\rho;a_1)=\left(\df{a_1}{2}\right)^{\rho-1/2}\sqrt{\df{2}{\pi}}\int_0^{\infty}\cos u\,
G_{2,0}^{0,1}\left(\df{4u^2}{a_1^2}\Bigg|
 \begin{matrix}\tf12,\rho+1\\- \\\end{matrix}\right)du.
 \end{equation}

We now state the general cosine transform of $G$ \cite[p.~420, Formula (8)]{erd2}. If $a>0$ and $c\neq 0$,
\begin{gather}\label{4b}
\int_0^{\infty}\cos(cx)G_{p,q}^{m,n}\left(ax^2\Bigg| \begin{matrix}a_1,a_2,\dots,a_p\\b_1,b_2,\dots, b_q\end{matrix}\right)dx
%=\df{1}{2\pi i}\int_L\df{\underset{j=1}{{\overset{m}{\prod}}}\Gamma(b_j-s)\underset{j=1}{{\overset{n}{\prod}}}\Gamma(1-a_j+s)}
%{\underset{j=m+1}{{\overset{q}{\prod}}}\Gamma(1-b_j+s)\underset{j=n+1}{{\overset{p}{\prod}}}\Gamma(a_j-s)}
=\df{\sqrt{\pi}}{c}G_{p+2,q}^{m,n+1}\left(\df{4a}{c^2}\Bigg| \begin{matrix}\tf12,a_1,a_2,\dots,a_p,0\\b_1,b_2,\dots, b_q\end{matrix}\right),
\end{gather}
provided that $p+q\leq 2(m+n)$;  $0\leq m+n-\tf12 p -\tf12 q$; $c>0$, $\Realp\,a_j\leq\tf12, j=1,2,\dots, n$; and $\Realp\,b_j\geq-\tf12, j=1,2,\dots,m$.
 %(It is not clear that we need strict inequalities on these conditions.  My guess is that we don't need them.  I will search to find an alternative source.)
  Utilizing \eqref{4b} in \eqref{3bb}, we find that
  \begin{align}
f_1(\rho;a_1)&=2^{1/2}\left(\df{a_1}{2}\right)^{\rho-1/2}G_{4,0}^{0,2}\left(\df{16}{a_1^2}\Bigg|
 \begin{matrix}\tf12,\tf12, \rho+1,0\\-\end{matrix}\right)\label{5b}\\
&=2^{-\rho+1}\df{x^{\rho-1/2}}{(u_2\cdots u_{r_1})^{\rho-1/2}}G_{4,0}^{0,2}\left(\df{16u_2^2}{a_2^2}\Bigg|
 \begin{matrix}\tf12,\tf12, \rho+1,0\\-\end{matrix}\right),\label{6b}
 \end{align}
 where in the last step we used \eqref{axu}.
  %Note that in \eqref{5}, we have employed the definition of $G$ in \eqref{G} to switch the roles of the parameters $\tf12$ and $\rho+\tf12$ in \eqref{3} to obtain an alternative set of  parameters on the right side of \eqref{5}.

 Next, we put \eqref{6b} in \eqref{mainmain} and repeat the process.  Note that with each application we use \eqref{axu}, and increase the number of $\tf12$'s and the number of $0$'s, each by 1.  Hence,
 \begin{align*}
 &K_{\rho+1/2}(x;-\tf12;r_1+1)\\
 =&2^{-\rho+1}x^{\rho-1/2}\int_0^{\infty}\sqrt{u_{r_1}}J_{-1/2}(u_{r_1})du_{r_1}\cdots\int_0^{\infty}\sqrt{u_2}J_{-1/2}(u_{2})du_2
 )\\
 &\times G_{4,0}^{0,2}\left(\df{4^2 u_2^2}{a_2^2}\Bigg|
 \begin{matrix}\tf12, \tf12,\rho+1,0\\-\end{matrix}\right)\\%\end{align*}\end{proof}\end{document}
 =&2^{-\rho+3/2}x^{\rho-1/2}
 \int_0^{\infty}\sqrt{u_{r_1}}J_{-1/2}(u_{r_1})du_{r_1}\cdots\int_0^{\infty}\sqrt{u_3}J_{-1/2}(u_{3})du_3\\
 &\times G_{6,0}^{0,3}\left(\df{4^3 u_3^2}{a_3^2}\Bigg|
 \begin{matrix}\tf12,\tf12,\tf12,\rho+1,0,0 \\-\end{matrix}\right)\\
 =&\cdots\cdots\cdots\\
   =&2^{-\rho+(r_1+1)/2}x^{\rho-1/2} G_{2(r_1+1),0}^{0,r_1+1}\left(\df{4^{r_1+1}}{ x^2}\Bigg|
 \begin{matrix}\tf12,\tf12,\dots, \tf12,\rho+1,0,0,\dots, 0\\-\end{matrix}\right),
  \end{align*}
  where there are $r_1\,\{0\}$'s and $(r_1+1)\, \{\tf12\}$'s.
In the last step, the integral evaluation is given by
  $$\int_0^{\infty}\sqrt{u}J_{-1/2}(u)G_{2n,0}^{0,n}\left(\df{u^2}{a^2}\Bigg|
 \begin{matrix}\{\tf12\},\rho+1,\{0\}\\-\end{matrix}\right)du
 =\sqrt{2}G_{2n+2,0}^{0,n+1}\left(\df{4}{a^2}\Bigg|
 \begin{matrix}\{\tf12\},\tf12,\rho+1,\{0\},0\\-\end{matrix}\right),$$
where there are $r_1\, \{0\}$'s and $(r_1+1)\,\{\tf12\}$'s.
This completes the proof of \eqref{k1}.
\end{proof}

\section{Special Case}
We consider the special case of \eqref{K} when $\rho=0$ and $r_1=1$.
Recall that \cite[pp.~54--55]{watson}
\begin{equation*}\label{1/2}
J_{-1/2}(z)=\sqrt{\df{2}{\pi z}}\cos z \qquad \text{and}\qquad J_{1/2}(z)=\sqrt{\df{2}{\pi z}}\sin z.
\end{equation*}
Thus, \eqref{K} reduces to, with $x=y^2$
\cite[p.~74, Equation (3.5)]{bessel2bbskaz},
\begin{align}\label{specialcase}
K_{\frac12}(x;-\tf12;2)=&\int_0^{\infty}J_{-1/2}(u)J_{1/2}\left(\dfrac{x}{u}\right)du\notag\\
=&\left(\df{2}{\pi\,y}\right)\int_0^{\infty}\cos u\sin\left(\df{y^2}{u}\right)du\notag\\
=&\left(\df{2}{\pi\,y}\right)\left(-y\left(\df{\pi}{2}Y_1(2y)+K_1(2y)\right)\right)
\notag\\
=&-Y_1(2y)-\df{2}{\pi}K_1(2y),
\end{align}
where we used \cite[p.~74, Equation (3.5)]{bessel2bbskaz} (or \cite[p.~184, Formula (3)]{watson}), and
where $Y_1$ and $K_1$ are the Bessel functions defined in \eqref{b.Y} and \eqref{b.K}, respectively. 
\section{Acknowledgement}
The authors are very grateful to Larry Glasser for guiding them to a proof of 
Theorem \ref{rsub2}.

\end{document}